\crefname{subsection}{subsection}{subsections}
\newcommand{\Z}{\mathbb{Z}}
\newcommand{\ga}{\Gamma}
\newcommand{\gp}[2]{\langle #1 \, | \, #2 \rangle}
\newcommand{\sgp}[1]{\langle #1 \rangle}
\DeclareMathOperator{\lk}{Link}
\def\coloneqq{\mathrel{\mathop\mathchar"303A}\mkern-1.2mu=}
\begin{document}

\title{On Subgroup Separability and Membership Problems\\ in Twisted Right-Angled Artin Groups}
\author{Islam Foniqi}

\date{}
\maketitle

\theoremstyle{plain}
\newtheorem{theorem}{Theorem}[section]
\newtheorem*{theorem*}{Theorem}
\newtheorem*{prop*}{Proposition}

\newtheorem{conj}[theorem]{Conjecture}
\newtheorem{notation}[theorem]{Notation}
\newtheorem*{note}{Note}
\theoremstyle{definition}

\setlength{\parindent}{0em}
\setlength{\parskip}{0.5em} 
\author{}

\newaliascnt{conjecture}{theorem}
\newtheorem{conjecture}[conjecture]{Conjecture}
\aliascntresetthe{conjecture}
\providecommand*{\conjectureautorefname}{Conjecture}

\newaliascnt{lemma}{theorem}
\newtheorem{lemma}[lemma]{Lemma}
\aliascntresetthe{lemma}
\providecommand*{\lemmaautorefname}{Lemma}

\newaliascnt{cor}{theorem}
\newtheorem{cor}[cor]{Corollary}
\aliascntresetthe{cor}
\providecommand*{\corautorefname}{Corollary}

\newaliascnt{claim}{theorem}
\newtheorem{claim}[claim]{Claim}

\newaliascnt{notation}{theorem}
\aliascntresetthe{notation}
\providecommand*{\notationautorefname}{Notation}

\aliascntresetthe{claim}
\providecommand*{\claimautorefname}{Claim}

\newaliascnt{remark}{theorem}
\newtheorem{remark}[remark]{Remark}
\aliascntresetthe{remark}
\providecommand*{\remarkautorefname}{Remark}

\newtheorem*{claim*}{Claim}
\theoremstyle{definition}

\newaliascnt{definition}{theorem}
\newtheorem{definition}[definition]{Definition}
\aliascntresetthe{definition}
\providecommand*{\definitionautorefname}{Definition}

\newaliascnt{example}{theorem}
\newtheorem{example}[example]{Example}
\aliascntresetthe{example}
\providecommand*{\exampleautorefname}{Example}

\newaliascnt{prop}{theorem}
\newtheorem{prop}[prop]{Proposition}
\aliascntresetthe{prop}
\providecommand*{\propautorefname}{Proposition}

\newaliascnt{question}{theorem}
\newtheorem{question}[question]{Question}
\aliascntresetthe{question}
\providecommand*{\questionautorefname}{Question}

\begin{abstract}
We characterize twisted right-angled Artin groups (T-RAAGs) that are subgroup separable using only their defining mixed graphs: such a group is subgroup separable if and only if the underlying simplicial graph contains neither induced paths nor squares on four vertices. This generalizes the results of Metaftsis–Raptis on classical right-angled Artin groups. Additionally, we show that the subgroup membership problem is decidable when the group is coherent, which occurs precisely when the defining mixed graph is chordal. We also address the rational and submonoid membership problems by exhibiting a cone-family of graphs for which the corresponding T-RAAGs have decidable rational and submonoid membership problems.
\end{abstract}

\renewcommand{\thefootnote}{\fnsymbol{footnote}} 

\thefootnote{
\noindent
\emph{MSC 2020 classification:} 20E26,
20F36, 20F65. \newline
\noindent
\emph{Key words:} subgroup separability, twisted right-angled Artin groups, LERF, mixed graph}

\footnotetext{The author acknowledges support from the EPSRC Fellowship grant EP/V032003/1 ‘Algorithmic, topological and geometric aspects of infinite groups, monoids and inverse semigroups’.
}


\section{Introduction}\label{Introduction}
The purpose of this paper is to describe twisted right-angled Artin groups that are subgroup separable, just by examining their defining graphs.

Subgroup separability is a compelling property of groups with significant connections with geometric topology, see \citep{scott1978subgroups} for an overview. For historical reasons, subgroup separability is also called LERF (for locally extended residually ﬁnite).
Being subgroup separable is inherited by subgroups and finite extensions, and is preserved by taking free products; however, it is not preserved by taking direct products. Nevertheless, a direct product (and a semi-direct product) of a subgroup separable group with the infinite cyclic group is again subgroup separable. This will be a very useful tool for our work in this article. 

In geometric group theory, the class of
\emph{right-angled Artin groups} (\emph{RAAGs}) plays a major role, see \citep{charney2007introduction} for a survey. RAAGs are also known as \emph{graph groups}. \\
\cite{metaftsis2008profinite} show that one can decide if a RAAG is subgroup separable or not by just examining its defining graph; the only obstructions for a RAAG to be subgroup separable are the two well known examples of non-subgroup separable groups, which are the RAAGs on~$4$ generators based on a square, and a path; the first one is~$F_2 \times F_2$, and the second one is denoted by~$L$ in their work.

This article deals with the same problem, with focus in another class of groups that generalises the class of RAAGs. 
This is the class of \emph{twisted right-angled Artin groups} (\emph{T-RAAGs} shortly). T-RAAGs appear in \citep{pride1986tits}, and also in  \citep{clancy2010homology} -- where the notion of a \emph{twisted Artin group} was introduced. We refer to T-RAAGs also with the name \emph{twisted graph groups}. More details regarding their normal forms and other properties can be found in the PhD thesis \citep[Chapter~3]{foniqi2022}, and also in \citep{foniqi2024twisted}. 

The presentation of T-RAAGs is given by generators and relations: there are finitely many generators and there is at most one relation between any two distinct generators~$a,b$; this relation can be either a commutation~$ab = ba$, or a so-called {\it Klein relation}~$aba = b$. This presentation can be encoded by the use of mixed graphs, where vertices represent the generators of the group, while edges give rise to the relations. An undirected edge, connecting generators~$a$ and~$b$, gives rise to the commutation~$ab = ba$, while a directed edge with origin at~$a$ and terminus at~$b$ defines the Klein relation~$aba = b$. If generators~$a,b$ are not connected by an edge, they are free of relations. \\
As basic and important examples of T-RAAGs, we have both the fundamental groups of a torus:~$\Z^2 = \gp{a, b}{ab = ba}$, and of the Klein bottle:~$K = \gp{a, b}{aba = b}$. Note that~$K$ is not a RAAG (because RAAGs are bi-orderable but~$K$ is not).

\Cref{Definitions_preliminaries_and_notation} serves to set the notation and provide the main definitions for the whole article. \\
Next, in \Cref{Subgroup separability for RAAGs}, we provide a summary of results to characterise RAAGs that are LERF, given by \autoref{thm: main theorem for RAAGs}, and proved by \cite{metaftsis2008profinite}:
a RAAG~$A(\Gamma)$ is LERF if and only if~$\Gamma$ does not contain both the path~$P_4$ and the square~$C_4$, as induced subgraphs.

In \Cref{Going from RAAGs to T-RAAGs} we provide our main contribution, which is the following result:

\begin{theorem*}[\autoref{thm: main theorem}]
Let~$\Gamma$ be a mixed graph and~$T(\Gamma)$ the T-RAAG based on~$\Gamma$. The group~$T(\Gamma)$ is subgroup separable if and only if~$\Gamma$ does not contain, as induced subgraphs, mixed paths and mixed squares in~$4$ vertices.
\end{theorem*}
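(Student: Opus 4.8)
The plan is to mirror the structure of the Metaftsis–Raptis argument for RAAGs, but adapted to the mixed-graph setting. One direction is the obstruction direction: if $\Gamma$ contains an induced mixed path or mixed square on four vertices, then $T(\Gamma)$ is not subgroup separable. Since subgroup separability passes to subgroups, and every induced subgraph $\Lambda\subseteq\Gamma$ gives a retract $T(\Lambda)\hookrightarrow T(\Gamma)$ (the standard retraction killing generators outside $\Lambda$), it suffices to show that the finitely many T-RAAGs on the relevant four-vertex mixed paths and mixed squares fail to be LERF. For each such graph, I would exhibit a finite-index subgroup containing a copy of $F_2\times F_2$ or of the Metaftsis–Raptis group $L$ (or directly produce a non-separable subgroup), invoking the fact that having a non-LERF finite-index subgroup forces non-LERF, together with the non-separability of $F_2\times F_2$ and $L$. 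This is a finite case check, but one must be careful because the various orientations of edges on a path or square on four vertices give several non-isomorphic mixed graphs; I expect most of these to contain $F_2\times F_2$ or $L$ as a subgroup after passing to an index-two subgroup (the one that "untwists" the Klein relations), using the semidirect-product structure.

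For the converse — if $\Gamma$ contains no induced mixed path and no induced mixed square on four vertices, then $T(\Gamma)$ is LERF — I would argue by induction on the number of vertices, exploiting the stated closure properties: LERF is preserved by free products, by finite extensions, and by taking direct and semidirect products with $\Z$. The combinatorial input is a structural dichotomy for mixed graphs with no induced $P_4$ and no induced $C_4$ (in the underlying simplicial graph): such a graph is either disconnected, or has a vertex $v$ whose link is the whole rest of the graph (a dominating vertex), giving $T(\Gamma)=T(\Gamma\setminus v)\rtimes\langle v\rangle$ or $T(\Gamma\setminus v)\times\langle v\rangle$ depending on the edge orientations at $v$; in the disconnected case $T(\Gamma)$ splits as a free product of the T-RAAGs on the components. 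In either case the hypothesis is inherited by the smaller pieces ($\Gamma\setminus v$ and the components are again induced subgraphs with no induced $P_4$ or $C_4$), so induction plus the closure properties finish the argument. The base cases are graphs on few vertices, which are either free groups, $\Z^2$, or $K$, all of which are LERF.

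The main obstacle is establishing the structural dichotomy, i.e.\ that a connected simplicial graph on $\geq 2$ vertices with no induced $P_4$ and no induced $C_4$ must have a dominating vertex. This is where I expect to lean on the classification already developed for RAAGs in \Cref{Subgroup separability for RAAGs} and on standard graph-theory facts about $\{P_4,C_4\}$-free graphs (these are the "trivially perfect" graphs, which are exactly the comparability graphs of forests, hence built by repeatedly adding dominating vertices and taking disjoint unions). The secondary subtlety is purely mixed-graph bookkeeping: once a dominating vertex $v$ is found, one must check that the Klein relations and commutations at $v$ really do assemble $T(\Gamma)$ as an (iterated) semidirect or direct product of $\langle v\rangle\cong\Z$ with $T(\Gamma\setminus v)$ — this uses the normal-form results for T-RAAGs referenced in the introduction, and the fact that an automorphism of $T(\Gamma\setminus v)$ induced by conjugation by $v$ is well-defined because $v$ is adjacent to every other vertex. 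Once these two points are in place, the induction is routine.
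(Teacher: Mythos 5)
Your overall skeleton coincides with the paper's (poisonous subgroups for one direction; induction via a dominating vertex plus closure under free products, finite extensions, and extensions of $\Z$ for the other), but both halves have a concrete gap. In the obstruction direction, the ``standard retraction killing generators outside $\Lambda$'' does not exist for mixed graphs: if $a\in\Lambda$, $b\notin\Lambda$ and $aba=b$ is a defining relation, then setting $b=1$ forces $a^2=1$, so the proposed retraction is not a well-defined homomorphism. (The inclusion $T(\Lambda)\leqslant T(\Gamma)$ for induced $\Lambda$ is still true, but the paper proves it via normal forms, \autoref{lem: subgraph injectivity}.) More importantly, your finite case check over all orientations of the edges of $P_4$ and $C_4$ is only sketched (``I expect most of these to contain\dots''), and it is not obvious that every orientation succumbs to the same index-two trick. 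The paper replaces this by one uniform statement, \autoref{lem: square injectivity}: the map $v\mapsto v^2$ embeds $A(\overline{\Gamma})$ into $T(\Gamma)$, so an underlying $P_4$ or $C_4$ immediately yields $A(P_4)$ or $A(C_4)$ as the subgroup generated by the squares of the relevant vertices, with no case analysis.

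In the converse direction, the claimed splitting $T(\Gamma)=T(\Gamma\setminus v)\rtimes\langle v\rangle$ (or $\times$) at a dominating vertex $v$ is false in general, and this is exactly the point where the twisted setting differs from RAAGs. If $[v,u\rangle\in E\Gamma$ (i.e.\ $vuv=u$), then $vuv^{-1}=uv^{-2}$ does not lie in the subgroup generated by $V\Gamma\setminus\{v\}$, so that subgroup is not normal; if instead $[u,v\rangle\in E\Gamma$, then $uvu^{-1}=vu^{-2}\notin\langle v\rangle$, so $\langle v\rangle$ is not normal. When $v$ carries directed edges of both orientations, neither candidate factor is normal and no such semidirect decomposition exists. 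Moreover, even in the cases where $T(\Gamma\setminus v)\rtimes\Z$ does exist, the closure property available (\autoref{thm: SgSep with semi-direct products}, \autoref{cor: SgSep with semi-direct products}) requires the \emph{normal} factor to be ERF, i.e.\ to be the $\Z$ factor; ``LERF $\rtimes\,\Z$'' is not covered (compare non-LERF free-by-cyclic groups). The paper circumvents both problems by first passing to the index-two normal subgroup $G=\langle v^2,\lk(v)\rangle$, computing via Reidemeister--Schreier that $G$ is again a T-RAAG in which $y=v^2$ commutes with the former incoming neighbours of $v$, so that $\langle y\rangle\cong\Z$ is normal in $G$ and $G\cong\Z\rtimes T(\Gamma\setminus v)$; then \autoref{cor: SgSep with semi-direct products} and \autoref{thm: SgSep with subgroups and finite extensions} complete the induction. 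Your argument needs this (or an equivalent) extra step to go through.
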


Here, in contrast with RAAGs, some of the edges in our graphs can be directed. In the case when the graph has no directed edges, we recover the result of \cite{metaftsis2008profinite} for RAAGs.

In \Cref{Membership Problem in T-RAAGs} we discuss membership problems in T-RAAGs. Regarding membership in finitely generated subgroups we have the following:

\begin{prop*}[\autoref{prop: subgroup membership}] Let $\Gamma$ be a mixed chordal graph. Then,~$T(\Gamma)$ has decidable subgroup membership problem.
\end{prop*}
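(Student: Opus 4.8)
The plan is to reduce decidability of the subgroup membership problem to the known theory of graphs of groups with well-behaved vertex and edge groups, exploiting the fact that a chordal mixed graph admits a decomposition via clique separators. First I would recall that, for a chordal simplicial graph, the classical RAAG decomposes as an iterated amalgamated product over complete subgraphs, and more generally a chordal mixed graph~$\Gamma$ with a vertex~$v$ whose link is a clique (a simplicial vertex, which exists in any chordal graph) yields a splitting of~$T(\Gamma)$ as an HNN extension or amalgamated free product where the edge group is the T-RAAG on a complete mixed graph. The key structural input is that T-RAAGs on complete mixed graphs are direct and semidirect products of~$\mathbb{Z}$'s and Klein-bottle-type pieces; in particular they are polycyclic-by-(virtually abelian), hence slender and in fact coherent, and — crucially — the results quoted earlier (the LERF characterization and the discussion of direct/semidirect products with~$\mathbb{Z}$) tell us these vertex and edge groups are subgroup separable and have uniformly solvable subgroup membership problem.

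Next I would assemble the global algorithm by induction on the number of vertices of~$\Gamma$. Using the simplicial vertex~$v$, write~$\Gamma = \Gamma' \cup \st(v)$ where~$\Gamma' = \Gamma \setminus \{v\}$ is again chordal (removing a simplicial vertex preserves chordality), and the corresponding splitting expresses~$T(\Gamma)$ as an HNN extension of~$T(\Gamma')$ with associated subgroups equal to~$T(\lk(v))$, the latter a T-RAAG on a clique. The induction hypothesis gives a solution to the subgroup membership problem in~$T(\Gamma')$. To lift this through the HNN extension I would invoke the standard machinery for membership in fundamental groups of graphs of groups: one needs (i) the vertex groups to have solvable membership problem (induction / base case), (ii) the edge groups to be separable in the adjacent vertex groups with effective coset enumeration, and (iii) some acylindricity or coherence condition ensuring that the Stallings-type folding process for the Bass–Serre tree terminates. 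Here coherence of~$T(\Gamma)$ — which, as the excerpt notes, is exactly equivalent to~$\Gamma$ being chordal — guarantees that all the subgroups encountered are finitely presented, so the folding automaton stays finite.

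Concretely, the main steps in order are: (1) prove the clique-separator / simplicial-vertex splitting of~$T(\Gamma)$ for chordal~$\Gamma$, identifying edge groups as T-RAAGs on complete mixed graphs; (2) establish the base case, solving subgroup membership in a T-RAAG on a complete graph by writing it explicitly as a polycyclic-by-abelian group (or more carefully, iterated~$\mathbb{Z}\rtimes$ and~$\mathbb{Z}\times$ construction) where membership is classical; (3) show the edge groups are separable in the vertex groups, with an algorithm enumerating finite-index subgroups containing a given coset representative — this follows from LERF of the vertex groups together with their effective coset structure; (4) run the induction, using a graph-of-groups membership algorithm (in the spirit of Stallings foldings on the Bass–Serre tree, terminating because of coherence). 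I expect step (4), or more precisely the effective termination of the folding/rewriting procedure across the HNN edges, to be the main obstacle: one must control how a candidate word for an element of the subgroup can be pushed around the Bass–Serre tree, and bound the size of the reduced form, which requires a careful normal-form analysis of T-RAAGs (available from the normal form results cited from~\citep{foniqi2022,foniqi2024twisted}) combined with the double-coset separability of the clique subgroups. The abelian-by-abelian (indeed virtually polycyclic) nature of the edge groups is what makes these double-coset computations effective, and coherence is what keeps everything finitely presented throughout the recursion.
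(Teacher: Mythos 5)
Your overall strategy is the same as the paper's: use chordality to decompose $T(\Gamma)$ as a graph of groups whose vertex and edge groups are T-RAAGs on complete mixed graphs, observe that these are virtually free abelian (the squares of the generators pairwise commute and generate a finite-index free abelian subgroup), and then solve membership by a folding-type algorithm over the splitting. The paper decomposes along clique separators, writing $T(\Gamma)=T(\Gamma_1)*_{T(C)}T(\Gamma_2)$ with $C$ complete, rather than peeling off a simplicial vertex, but for chordal graphs these are interchangeable and your induction would go through equally well. Two corrections, however. First, the splitting obtained from a simplicial vertex $v$ is an amalgamated free product $T(\Gamma\setminus\{v\}) *_{T(\lk(v))} T(\st(v))$, not an HNN extension: there is no stable letter identifying two copies of $T(\lk(v))$ inside $T(\Gamma\setminus\{v\})$, since $v$ together with its clique link generates the complete-graph piece $T(\st(v))$. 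Second, and more importantly, the step you flag as the main obstacle --- effective termination of the folding procedure on the Bass--Serre tree --- is not something you need to re-derive, and coherence is not what makes it work. It is precisely the content of \citep*[Theorem 1.1]{kapovich2005foldings}, which the paper invokes as a black box; that theorem applies here because the vertex and edge groups are polycyclic-by-finite (indeed virtually free abelian), and it is the slender/Noetherian nature of these groups (every subgroup finitely generated) that bounds the folding process, not coherence or finite presentability of the ambient group. Once the vertex and edge groups are identified as virtually free abelian, your steps (3) and (4) are subsumed by that citation and the proof closes.
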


Non-separable T-RAAGs have undecidable submonoid and rational subset membership problems. On the other hand, we construct a cone-family $\mathcal{R}$ of mixed graphs (see \autoref{def: sink-cone} and \autoref{def: class R}) such that the corresponding T-RAAGs are subgroup separable, satisfying the following:

\begin{theorem*}[\autoref{thm: class R}]
Let $\Gamma$ be a mixed graph belonging to $\mathcal{R}$. Then $T(\Gamma)$ has decidable rational subset membership problem. Consequently, $T(\Gamma)$ has decidable submonoid membership problem as well.
\end{theorem*}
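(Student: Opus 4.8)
I would argue by induction along the inductive construction of $\mathcal R$ (\autoref{def: class R}), reducing the rational subset membership problem (RSMP) at each stage to that of a RAAG on a $\{P_4,C_4\}$-free graph. For such RAAGs the RSMP is decidable by the Lohrey--Steinberg theorem on graph groups, whose ``transitive forest'' hypothesis is precisely the absence of induced $P_4$'s and $C_4$'s --- the RAAG-counterpart of the condition in \autoref{thm: main theorem for RAAGs}. Alongside this I would use three standard facts about the RSMP: it is inherited by finite-index subgroups and by finite-index overgroups (Grunschlag), it is preserved under free products, and it is decidable for free groups (Benois) and for finitely generated virtually abelian groups (rational subsets of $\Z^{n}$ being semilinear).

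The engine of the induction is an algebraic reading of the sink-cone operation (\autoref{def: sink-cone}). If $\widehat\Gamma$ is the sink-cone over $\Gamma$ --- a fresh vertex $v$ joined to every vertex of $\Gamma$ with all new edges directed into $v$ --- then each new Klein relation $ava=v$ rewrites as $vav^{-1}=a^{-1}$, so conjugation by $v$ acts on the standard subgroup $T(\Gamma)\le T(\widehat\Gamma)$ as the map $\sigma$ inverting every generator; one checks on the defining relations of $T(\Gamma)$ that this map is an automorphism and that $\sigma^{2}=\id$. Hence $T(\widehat\Gamma)\cong T(\Gamma)\rtimes_\sigma\Z$, and $T(\Gamma)\rtimes\langle v^{2}\rangle\cong T(\Gamma)\times\Z$ sits inside it with index at most $2$.

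Now I would run the induction carrying the stronger invariant that $T(\Gamma)$ is virtually a RAAG on a $\{P_4,C_4\}$-free graph. The base graphs of \autoref{def: class R} give free, free abelian, or more generally finitely generated virtually abelian groups, each virtually $A(\Delta)$ for a complete graph $\Delta$ (which is $\{P_4,C_4\}$-free), so the invariant and decidability hold there. For a sink-cone step, if $A(\Lambda)\le T(\Gamma)$ has finite index with $\Lambda$ being $\{P_4,C_4\}$-free, then $T(\widehat\Gamma)$ contains $T(\Gamma)\times\Z\supseteq A(\Lambda)\times\Z$ with finite index, and $A(\Lambda)\times\Z$ is again a RAAG, namely on the graph obtained from $\Lambda$ by adjoining a universal vertex, which remains $\{P_4,C_4\}$-free since a universal vertex cannot lie on an induced $P_4$ or $C_4$; so the invariant survives, and Lohrey--Steinberg together with Grunschlag yield decidability of the RSMP for $T(\widehat\Gamma)$. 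If \autoref{def: class R} also permits disjoint unions, the step $T(\Gamma_1\sqcup\Gamma_2)\cong T(\Gamma_1)*T(\Gamma_2)$ is handled by closure of the RSMP under free products. This proves the first assertion, and the second is immediate: a finitely generated submonoid $\langle s_1,\dots,s_k\rangle^{+}$ of $T(\Gamma)$ is the rational subset accepted by the one-state automaton with loops labelled $s_1,\dots,s_k$, so membership in it is a particular instance of the RSMP.

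The main difficulty is organisational rather than computational: keeping the inductive invariant honest across all operations allowed in \autoref{def: class R}. Two points demand care. First, one must know that $T(\Gamma)$ embeds in the sink-cone $T(\widehat\Gamma)$ as a retract (a ``standard subgroup'') and that the cyclic quotient splits off, so that the semidirect-product description is legitimate; this is where the normal-form theory of T-RAAGs enters. Second, one must pin down the base class of \autoref{def: class R} and verify that coning and (if present) disjoint union never escape the class of groups virtually isomorphic to RAAGs on $\{P_4,C_4\}$-free graphs --- in particular, controlling via the Kurosh subgroup theorem how coning interacts with free products. It is precisely because such control is unavailable for all subgroup separable T-RAAGs that the theorem is stated for the restricted family $\mathcal R$, decidability of the RSMP for general subgroup separable T-RAAGs being open.
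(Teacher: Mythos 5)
Your structural reading of the cone step is exactly the paper's: conjugation by the tip $w$ inverts the generators joined to it by directed edges (and fixes the rest), so $T(\Gamma\setminus\{w\})\times\langle w^{2}\rangle=\sgp{w^{2},\lk(w)}$ sits inside $T(\Gamma)$ with index $2$; the paper obtains the same index-$2$ subgroup via the Reidemeister--Schreier computation already carried out in the proof of \autoref{thm: main theorem}. Where you genuinely diverge is in how decidability is then extracted. The paper stays inside the class of T-RAAGs and inducts on $|V\Gamma|$, invoking closure of decidability of the rational subset membership problem under free products, under direct products with $\Z$, and under finite extensions. You instead carry the stronger invariant that $T(\Gamma)$ is \emph{virtually a RAAG on a transitive forest}, and only ever quote Lohrey--Steinberg for honest RAAGs, plus Grunschlag's finite-index transfer. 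This buys something real: the $\times\,\Z$ step is the delicate one (closure of rational subset membership under direct product with $\Z$ is not a formal consequence of decidability alone; Lohrey--Steinberg establish it for their class via a stronger effective-semilinearity invariant), and your reduction pushes that step entirely into the RAAG world where their theorem applies as a black box. One minor imprecision: \autoref{def: sink-cone} also allows undirected edges $[v,w]$ to the tip, so $\sigma$ is only a partial inversion; this is harmless, as any such partial inversion is still an order-two automorphism.

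The gap is the one you flag but do not close: your invariant is not shown to survive the disjoint-union step. If $A(\Lambda_i)\leqslant T(\Gamma_i)$ merely has finite index, the Kurosh/kernel argument applied to $T(\Gamma_1)\ast T(\Gamma_2)$ yields a finite-index subgroup that is a free product of conjugates of chosen \emph{normal} finite-index subgroups $N_i\leqslant T(\Gamma_i)$ together with a free group, and for this to be a transitive-forest RAAG the $N_i$ themselves must be transitive-forest RAAGs. So the invariant must be strengthened to ``$T(\Gamma)$ has a \emph{normal} finite-index subgroup isomorphic to a transitive-forest RAAG'', and that strengthened invariant must then survive the cone step: $N\times\langle w^{2}\rangle$ is normal in $T(\Gamma\setminus\{w\})\times\Z$, but normality in $T(\Gamma)$ requires $N$ to be invariant under the partial-inversion automorphism $\sigma$, which a general normal $N$ need not be. Either you arrange $\sigma$-invariance explicitly at each stage (feasible, but it is precisely the bookkeeping you defer), or you fall back at the free-product step on closure of rational subset membership under free products --- which is what the paper does, at the cost of then needing the $\times\,\Z$ closure for T-RAAGs rather than RAAGs. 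Your final reduction of submonoid membership to rational subset membership is correct and identical to the paper's.
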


\section{Definitions, preliminaries, and notation}\label{Definitions_preliminaries_and_notation}

This section serves to set the notation for the article. Our groups will be given by finite presentations~$G = \gp{A}{R}$; an element~$g \in G$ can be written as a word in~$(A \cup A^{-1})^{\ast}$. If~$X$ is a subset of~$G$ then~$\sgp{X}$ denotes the subgroup of~$G$ generated by~$X$. 

\subsection{Right-angled Artin groups}\label{Right-angled Artin groups}
The graphs we use to define graph groups are called \emph{simplicial}, they come with no loops, and no multiple edges.
\begin{definition}\label{def_simplicial_graph}
A {\it simplicial graph~$\Gamma$} is a pair~$\Gamma = (V, E)$, where~$V = V\ga$ is a non-empty set whose elements are called \emph{vertices}, and~$E = E\ga \subseteq \{\{x,y\} \mid x,y \in V, x\neq y\}$ is a set of paired distinct vertices, whose elements are called \emph{edges}.
\end{definition}
For a finite simplicial graph~$\Gamma = (V, E)$ we use~$A(\Gamma)$ to denote the right-angled Artin group based on~$\Gamma$, defined by the presentation 
$$A(\Gamma) \coloneqq \langle\, V \mid uv = vu \text{ whenever } \{u,v\}\in E\,\rangle.$$
Two important examples of RAAGs,
are those defined by paths and cycles on four vertices, provided graphically as:
\begin{figure}[H]
\centering
\begin{tikzpicture}[>={Straight Barb[length=7pt,width=6pt]},thick, scale =0.75]
\draw[] (-6.5, 0) node[left] {$P_4 =~$};
\draw[fill=black] (-6,0) circle (1pt) node[above] {$a$};
\draw[fill=black] (-5,0) circle (1pt) node[above] {$b$};
\draw[fill=black] (-4,0) circle (1pt) node[above] {$c$};
\draw[fill=black] (-3,0) circle (1pt) node[above] {$d$};
\draw[] (-3, 0) node[right] {$\,\,\, ,$};

\draw[thick] (-6,0) -- (-5,0);
\draw[thick] (-5,0) -- (-4,0);
\draw[thick] (-4,0) -- (-3,0);

\draw[] (1, 0) node[left] {$C_4 =~$};
\draw[fill=black] (2,1) circle (1pt) node[left] {$a$};
\draw[fill=black] (4,1) circle (1pt) node[right] {$b$};
\draw[fill=black] (4,-1) circle (1pt) node[right] {$c$};
\draw[fill=black] (2,-1) circle (1pt) node[left] {$d$};

\draw[thick] (2, 1) -- (4, 1);
\draw[thick] (4,1) -- (4,-1);
\draw[thick] (4,-1) -- (2,-1);
\draw[thick] (2,-1) -- (2,1);
\end{tikzpicture}
\end{figure} 
Some authors refer to~$P_4$ and~$C_4$ with the word \emph{poisonous}; similarly the corresponding groups~$L = A(P_4)$ and~$A(C_4) \simeq F_2 \times F_2$ get called \emph{poisonous groups}, as they tend to be obstructions for several properties.

\subsection{Twisted right-angled Artin groups}\label{Twisted right-angled Artin groups}

To define T-RAAGs we use mixed graphs, which are similar to simplicial graphs but directed edges are allowed.
We define mixed graphs using simplicial graphs as an underlying structure.



\begin{definition}\label{def_mixed_graph}
A {\it mixed graph}~$\Gamma$ consists of an {\it underlying simplicial graph}~$(V, E)$, a set of {\it directed edges}~$D \subseteq E$, and two maps~$o, t \colon D \longrightarrow V$. We denote it as~$\Gamma = (V, E, D, o, t)$. \\
For an edge~$e=\{x,y\} \in D$, the maps~$o, t$ satisfy~$o(e), t(e) \in \{x,y\}$, and~$o(e) \neq t(e)$. Refer to~$o(e)$ and~$t(e)$ as the {\it origin} and the {\it terminus} of edge~$e$ respectively.
\end{definition}

\begin{notation} 
Let~$\Gamma = (V, E, D, o, t)$ be a mixed graph.
\begin{itemize}[itemsep=4pt,parsep=0pt,topsep=0pt, partopsep=0pt]
    \item[(i)] If~$e = \{a,b\} \in E \setminus D$, we write~$e = [a, b]$, and note that also~$e = [b, a]$. Instead,
    \item[(ii)] if~$e = \{a,b\} \in D$, we write~$e = [o(e), t(e)\rangle$. In this case, either~$e = [a, b\rangle$, or~$e = [b, a\rangle$.
\end{itemize}
Graphically, we present the respective edges~$[a,b]$, and~$[a,b\rangle$ as:
\begin{figure}[H]
\centering
\begin{tikzpicture}[>={Straight Barb[length=7pt,width=6pt]},thick]

\draw[fill=black] (0,0) circle (1.5pt) node[left] {$a$};
\draw[fill=black] (2,0) circle (1.5pt) node[right] {$b$};
\draw[fill=black] (5,0) circle (1.5pt) node[left] {$a$};
\draw[fill=black] (7,0) circle (1.5pt) node[right] {$b$};
\draw[thick] (0,0) -- (2,0);
\draw[thick, ->] (5,0) --  (7,0);
\end{tikzpicture}
\end{figure}
Since every edge in~$E = E\Gamma$ has exactly one of these two types, one can write: 
\[
E\Gamma = \overline{E\Gamma} \sqcup \overrightarrow{E\Gamma}, \text{ where } \overline{E\Gamma} = E \setminus D, \text{ and } \overrightarrow{E\Gamma} = D.
\]
\end{notation}
When dealing with group presentations encoded from graphs, we want edge~$[a,b]$ to represent the commutation of~$a,b$; and edge~$[a,b\rangle$ to represent the corresponding Klein relation; hence by a slight abuse of notation, we also set:~$[a,b] = aba^{-1}b^{-1}$, and~$[a,b\rangle = abab^{-1}$.

\begin{definition}\label{def_of_traags}
Let~$\Gamma = (V,E)$ be a mixed graph with~$E = \overline{E\Gamma} \sqcup \overrightarrow{E\Gamma}$. Define a group:
$$T(\Gamma) = \langle  V \mid ab = ba \text{ if~$[a,b]\in \overline{E\Gamma}$ }, aba = b \text{ if~$[a,b\rangle \in \overrightarrow{E\Gamma}$ }\rangle.$$
Call~$T(\Gamma)$ the {\it twisted right-angled Artin group} based on~$\Gamma$, and~$\Gamma$ {\it the defining graph} of~$T(\Gamma)$. 
\end{definition}
If~$\overrightarrow{E\Gamma} = \emptyset$ (or equivalently if~$E\Gamma = \overline{E\Gamma}$), then~$T(\Gamma)$ is the {\it right-angled Artin group}~$A(\Gamma)$.
\begin{remark}\label{rem: underlying RAAG of a T-RAAG}
For a given mixed graph~$\Gamma = (V, E, D, o, t)$, denote by~$\overline{\Gamma}$ the underlying simplicial graph. Call the group~$A(\overline{\Gamma})$, the \emph{underlying RAAG} of~$T(\Gamma)$. 
As a general theme, one tries to find connections between~$T(\Gamma)$ and~$A(\overline{\Gamma})$.
\end{remark}

\subsection{Subgroup separability}
\begin{definition}
Let~$G$ be a group and~$H \leqslant G$ a subgroup. We say that~$H$ is \emph{separable} in~$G$, if for any~$g \in G \setminus H$, there is a finite index subgroup~$K_g \leqslant G$ such that~$H \leqslant K_g$ but~$g \notin K_g$.
\end{definition}

\begin{definition}
A group~$G$ is called \emph{subgroup separable} (or LERF)
if every ﬁnitely generated subgroup of~$G$ is separable in~$G$; equivalently, a ﬁnitely generated subgroup of~$G$ is equal to an intersection of subgroups of ﬁnite index in~$G$. 
\end{definition}

Being LERF is inherited by subgroups and finite extensions. Indeed, one has:
\begin{theorem}[{\citealp[Lemma 1.1]{scott1978subgroups}}]\label{thm: SgSep with subgroups and finite extensions}
Let~$G$ be a group and~$H \leqslant G$. Then:
\begin{itemize}[topsep=0pt,itemsep = 0pt]
\item If~$H$ is not LERF then~$G$ is not LERF.
\item If~$H$ is LERF and~$H \leqslant G$ has finite index, then~$G$ is LERF.
\end{itemize}
\end{theorem}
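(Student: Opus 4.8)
The plan is to work throughout with the \emph{profinite topology} $\mathrm{PT}(G)$ on a group $G$, i.e.\ the topology in which the open sets are the unions of cosets of finite-index subgroups. Its relevant features are: a subgroup of $G$ is separable in $G$ precisely when it is $\mathrm{PT}(G)$-closed; every finite-index subgroup is clopen; and left and right translations are $\mathrm{PT}(G)$-homeomorphisms (for a right translation $x\mapsto xa$ one uses $Na=a(a^{-1}Na)$, which converts a right coset of a finite-index subgroup into a left coset of a finite-index subgroup). The single structural input behind both bullets is that, when $[G:H]<\infty$, the subspace topology that $\mathrm{PT}(G)$ induces on $H$ coincides with $\mathrm{PT}(H)$, and $H$ is itself $\mathrm{PT}(G)$-closed: a finite-index subgroup of $H$ is finite-index in $G$ since $[G:M]=[G:H][H:M]$, intersecting a finite-index subgroup of $G$ with $H$ gives a finite-index subgroup of $H$, and $G\setminus H$ is a finite union of $\mathrm{PT}(G)$-open cosets.

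For the first bullet I would prove the contrapositive: if $G$ is LERF then so is $H$. Given a finitely generated $K\leqslant H$, it is also finitely generated as a subgroup of $G$, hence $\mathrm{PT}(G)$-closed; since $K=K\cap H$ with $K$ closed in $\mathrm{PT}(G)$, $K$ is closed in the subspace topology on $H$, which is contained in $\mathrm{PT}(H)$, so $K$ is $\mathrm{PT}(H)$-closed, i.e.\ separable in $H$. This direction requires nothing about $H$ beyond being a subgroup.

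For the second bullet, assume $H$ is LERF with $[G:H]<\infty$, let $K\leqslant G$ be finitely generated, and show $K$ is $\mathrm{PT}(G)$-closed. First, $K\cap H$ has finite index in $K$, hence is finitely generated, and it lies in $H$; LERF of $H$ makes it $\mathrm{PT}(H)$-closed, and then, using $\mathrm{PT}(H)=\mathrm{PT}(G)|_H$ together with $H$ being $\mathrm{PT}(G)$-closed, $K\cap H$ is $\mathrm{PT}(G)$-closed. Now write $K=\bigsqcup_{j=1}^{m}(K\cap H)k_j$ as a finite union of right cosets of $K\cap H$; each summand is the image of the closed set $K\cap H$ under a right translation, hence closed, and a finite union of closed sets is closed. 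Therefore $K$ is $\mathrm{PT}(G)$-closed, i.e.\ separable in $G$, and $G$ is LERF.

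I expect the main --- essentially the only --- obstacle to be the bookkeeping lemma that $\mathrm{PT}(H)=\mathrm{PT}(G)|_H$ for finite-index $H$, packaged with the observation that right multiplication is a $\mathrm{PT}$-homeomorphism; once these are in hand, everything reduces to coset arithmetic. One could instead avoid topology and directly produce the witnessing finite-index subgroups (after replacing $H$ by its normal core, and splitting on whether $g\in KH$), but this just re-encodes the same translation manoeuvre with more overhead, so I would present the topological argument.
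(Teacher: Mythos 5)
Your argument is correct. The paper itself gives no proof of this statement --- it is quoted directly from \citet[Lemma 1.1]{scott1978subgroups} --- and your profinite-topology argument is the standard one underlying Scott's lemma: the two key facts you isolate (that $\mathrm{PT}(G)|_H=\mathrm{PT}(H)$ when $[G:H]<\infty$, with $H$ closed in $G$, and that translations are homeomorphisms of the profinite topology) together with the Schreier observation that $K\cap H$ is finitely generated are exactly what is needed, so there is nothing to add.
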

Being LERF is also preserved by taking free products.
\begin{theorem}[{\citealp[Corollary 1.2]{burns1971finitely}}]\label{thm: SgSep with free products}
If both~$G_1$ and~$G_2$ are LERF groups, then their free product~$G_1 \ast G_2$ is LERF as well.
\end{theorem}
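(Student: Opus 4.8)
The plan is to prove the structural statement behind the corollary: every finitely generated subgroup $H$ of $G = G_1 \ast G_2$ is separable, using the action on the Bass--Serre tree together with the LERF hypothesis on the factors. First I would reformulate separability topologically by equipping $G$ with its profinite topology, in which $H$ is separable exactly when it is closed. The base case $H = \{1\}$ is residual finiteness of $G$, which follows from Gruenberg's theorem, since LERF groups are residually finite and free products of residually finite groups are residually finite; the substance is the general finitely generated case.

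Next I would record the structure of $H$ geometrically. The free product $G$ acts on its Bass--Serre tree $T$, with trivial edge stabilizers and vertex stabilizers the conjugates of $G_1$ and $G_2$. Restricting to $H$ and passing to the minimal $H$-invariant subtree, finite generation of $H$ yields a finite quotient graph of groups $H \backslash\!\backslash T$. Reading off its vertex groups recovers the Kurosh decomposition
$$H = F \ast A_1 \ast \cdots \ast A_m \ast B_1 \ast \cdots \ast B_n,$$
where $F$ is a finitely generated free group (coming from edges outside a spanning tree), each $A_i$ is a finitely generated subgroup of some conjugate $x_i G_1 x_i\inv$, and each $B_j$ a finitely generated subgroup of some conjugate $y_j G_2 y_j\inv$; there are only finitely many factors.

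Now fix $g \in G \setminus H$; I must produce a finite-index $K \leqslant G$ with $H \leqslant K$ and $g \notin K$, equivalently a finite quotient $q \colon G \to Q$ with $q(g) \notin q(H)$. The engine is the LERF hypothesis applied inside the factors: each $A_i$ (conjugated back into $G_1$) is separable in $G_1$ and each $B_j$ in $G_2$, so there exist finite-index subgroups $U_1 \leqslant G_1$ and $U_2 \leqslant G_2$ containing the relevant $A_i$ and $B_j$ while excluding prescribed finitely many elements. The heart of the proof is then an assembly step: I would complete the finite core $H \backslash\!\backslash T$ to a finite-sheeted covering of the graph of groups for $G$, using $U_1, U_2$ to finitize the vertex groups and gluing the remaining dangling edges so that the fundamental group $K$ of the cover is finite-index in $G$, contains $H$, and omits $g$. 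This is the free-product analogue of the Stallings--Hall folding proof that finitely generated subgroups of free groups are separable.

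The main obstacle is precisely this assembly. A naive retraction argument fails: although killing $G_2$ shows that the profinite topology of $G$ restricts to that of $G_1$, it does not make a separable subgroup of a factor closed in $G$, because the pulled-back finite-index subgroups all contain the kernel. One must instead treat the normal form of $g$, whose finitely many syllables alternate between $G_1 \setminus \{1\}$ and $G_2 \setminus \{1\}$, and arrange $U_1, U_2$ together with the edge-gluings simultaneously, so that reading $g$ through the finite cover moves the basepoint whenever $g \notin H$. Controlling all syllables of $g$ and all dangling edges of the core with a single finite quotient, so that the construction genuinely descends to a finite-index subgroup of $G$ rather than merely of a factor, is the delicate combinatorial core of the argument, and it is exactly here that LERF of both factors is indispensable.
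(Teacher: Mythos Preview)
The paper does not give its own proof of this theorem; it is stated as a known result with a citation to \cite{burns1971finitely} and used as a black box. There is therefore nothing in the paper to compare your argument against.

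That said, your outline is a legitimate modern route to the result: act on the Bass--Serre tree, extract the Kurosh decomposition of $H$, use LERF of the factors to separate finitely many elements inside each $G_i$, and then complete the finite core of $H\backslash\!\backslash T$ to a finite-sheeted cover of the edge-of-groups for $G$. This geometric/covering-space strategy is different in flavour from Burns's original 1971 proof, which is purely combinatorial: it works directly with normal forms in the free product and builds the required finite-index subgroup by hand via explicit coset-representative and amalgamation arguments, with no appeal to tree actions (Bass--Serre theory was only just emerging at the time). The two approaches buy different things: Burns's is self-contained and elementary, while yours makes the structure of $H$ transparent and generalizes more readily to graphs of groups.

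Your sketch correctly isolates the genuine difficulty---the assembly step turning separability data in the factors into a single finite-index subgroup of $G$---but you stop short of executing it. As written, this is a plan rather than a proof: you would still need to specify how the finitely many syllables of $g$ dictate the choice of $U_1, U_2$ and how the dangling edges are glued so that the resulting cover is genuinely finite-sheeted and the basepoint is moved by $g$. That is where all the work lies.
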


In general, subgroup separability is not preserved by semi-direct products; indeed, the group~$F_2 \times F_2$ is not LERF, see \autoref{cor: A(C_4) is not LERF}. 
However, if the normal factor is ERF, then it becomes true. The notion of ERF is stronger than LERF. While LERF is defined with respect to finitely generated subgroups, ERF shares the same definition but with respect to all subgroups. Hence, if all subgroups of a group~$G$ are finitely generated, then~$G$ is LERF if and only if~$G$ is ERF. This is true for the group~$\Z$, which yields several useful results.
\begin{theorem}[{\citealp[Theorem 4]{allenby2006locally}}]\label{thm: SgSep with semi-direct products}
Let~$G = N\cdot H$ with~$N \cap H = \{1\}$ be a splitting extension of the normal finitely generated subgroup~$N$ by the group~$H$. If~$N$ is ERF and~$H$ is LERF, then~$G$ is LERF.
\end{theorem}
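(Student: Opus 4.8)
The plan is to verify the definition of LERF for $G$ directly. Fix a finitely generated subgroup $S \leqslant G$ and an element $g \in G \setminus S$; I must produce a finite-index subgroup $K \leqslant G$ with $S \leqslant K$ and $g \notin K$. The whole argument is organised around the projection $\pi \colon G \twoheadrightarrow G/N \cong H$ (available because the extension splits with $N$ normal), together with a case distinction according to whether or not $\pi(g)$ lies in the finitely generated image $\pi(S) \leqslant H$.

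In the first case, $\pi(g) \notin \pi(S)$. Since $H$ is LERF and $\pi(S)$ is finitely generated, I can separate $\pi(g)$ from $\pi(S)$ by a finite-index subgroup $L \leqslant H$ with $\pi(S) \leqslant L$ and $\pi(g) \notin L$. Then $K \coloneqq \pi^{-1}(L)$ has finite index $[H:L]$ in $G$, contains $S$, and excludes $g$; this disposes of the case with no further work.

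The second case, $\pi(g) \in \pi(S)$, is where the hypotheses on $N$ enter and is the heart of the proof. Here I write $g = n s$ with $s \in S$ and $n \coloneqq g s^{-1} \in N$; since $g \notin S$ we get $n \notin S$, hence $n \in N \setminus (S \cap N)$. As $g \in K \iff n \in K$ whenever $s \in S \leqslant K$, it suffices to find a finite-index $K \supseteq S$ with $n \notin K$. I would first use that $N$ is ERF to separate the subgroup $S \cap N$ — which need not be finitely generated, so that ERF rather than merely LERF is required — from $n$ inside $N$: there is a finite-index $V \leqslant N$ with $S \cap N \leqslant V$ and $n \notin V$. The key move is then to \emph{promote} $V$ to a subgroup normal in all of $G$: because $N$ is finitely generated it has only finitely many subgroups of index $[N:V]$, so their intersection $W$ is a finite-index characteristic subgroup of $N$ with $W \leqslant V$; being characteristic in the normal subgroup $N$, it is normal in $G$. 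Passing to $\bar G \coloneqq G/W$, the image $N/W$ is a finite normal subgroup, while $H$ embeds (since $H \cap W \subseteq H \cap N = \{1\}$) as a subgroup of index $|N/W|$. Thus $\bar G$ is a finite extension of a copy of $H$, so by \autoref{thm: SgSep with subgroups and finite extensions} it is LERF. A short computation shows the images satisfy $\bar n \notin \bar S$: if $n \in SW$, writing $n = s' w$ forces $\pi(s') = 1$, so $s' \in S \cap N \leqslant V$ and $w \in W \leqslant V$, giving $n \in V$, a contradiction. Separating the finitely generated $\bar S$ from $\bar n$ in the LERF group $\bar G$ and pulling the result back along $G \twoheadrightarrow \bar G$ yields the desired $K$.

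I expect the promotion step to be the main obstacle, both conceptually and in getting the hypotheses to line up. It is exactly here that finite generation of $N$ is indispensable — without it there need not be a $G$-normal finite-index refinement of $V$, and the reduction to a finite extension of $H$ collapses. The accompanying points that require care are verifying that $\bar G$ really is a finite extension of $H$ (using $N \cap H = \{1\}$ so that $H$ survives in the quotient) and checking the separation $\bar n \notin \bar S$, after which the finite-extension form of \autoref{thm: SgSep with subgroups and finite extensions} does the remaining work.
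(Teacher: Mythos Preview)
The paper does not actually prove this statement: it is quoted verbatim as \citep[Theorem 4]{allenby2006locally} and used as a black box, so there is no in-paper argument to compare against. Your proof is correct and is essentially the standard argument from the cited source --- the two-case split via the projection $\pi\colon G \to H$, the use of ERF to separate $S\cap N$ from $n$ inside $N$, the promotion of $V$ to a characteristic (hence $G$-normal) finite-index $W\leqslant N$ via finiteness of subgroups of a given index in a finitely generated group, and the reduction to a finite extension of $H$ are exactly how Allenby--Tang proceed. The delicate checks you flag (that $H$ injects into $G/W$ because $H\cap W\subseteq H\cap N=\{1\}$, and that $\bar n\notin\bar S$) are handled correctly.
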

The infinite cyclic group~$\Z$ is ERF, as it is LERF and its subgroups are finitely generated.
\begin{cor}\label{cor: SgSep with semi-direct products}
If~$H$ is LERF, then any semi-direct product~$G = \Z \rtimes H$ is LERF as well.
\end{cor}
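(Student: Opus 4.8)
The plan is to obtain this as an immediate special case of \autoref{thm: SgSep with semi-direct products}, taking the finitely generated normal factor to be~$\Z$. Given~$G = \Z \rtimes H$, I would view it as an internal splitting extension~$G = N \cdot H$ with~$N = \Z$ the normal copy of the infinite cyclic group and~$H$ a complement, so that~$N \cap H = \{1\}$ and~$N$ is finitely generated (indeed cyclic). By the discussion preceding the statement, $\Z$ is ERF: it is LERF, and all of its subgroups are finitely generated, so for~$\Z$ the properties LERF and ERF coincide. Since~$H$ is LERF by hypothesis, \autoref{thm: SgSep with semi-direct products} applies directly and yields that~$G$ is LERF.

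If one wants to be self-contained about~$\Z$ being ERF, the argument is elementary: the subgroups of~$\Z$ are exactly~$\{1\}$ and the~$n\Z$ for~$n \geqslant 1$; each~$n\Z$ already has finite index in~$\Z$, while~$\{1\} = \bigcap_{m \geqslant 1} m\Z$ is separable because for any~$g \neq 1$ one can choose~$m$ with~$g \notin m\Z$. Hence every subgroup of~$\Z$ is an intersection of finite-index subgroups, i.e.\ $\Z$ is ERF.

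I expect essentially no obstacle here; the statement is a direct specialization of the quoted result of Allenby--Tang. The only point worth emphasizing is that nothing about the action of~$H$ on~$\Z$ is used, so the conclusion holds for \emph{every} semidirect product~$\Z \rtimes H$ (equivalently, for both possible actions), because \autoref{thm: SgSep with semi-direct products} imposes no condition on the extension beyond the normal subgroup being finitely generated and ERF.
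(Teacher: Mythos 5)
Your proposal is correct and matches the paper's argument exactly: the corollary is obtained by specializing \autoref{thm: SgSep with semi-direct products} to $N = \Z$, using the observation (made just before the corollary in the paper) that $\Z$ is ERF because it is LERF with all subgroups finitely generated. Your additional self-contained verification that $\Z$ is ERF, and your remark that the action of $H$ plays no role, are both accurate and consistent with how the paper uses the result.
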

Since direct products are in particular semi-direct products, we have:
\begin{cor}\label{cor: SgSep with direct products}
If~$H$ is LERF, then the direct product~$G = \Z \times H$ is LERF as well.
\end{cor}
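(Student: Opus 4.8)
The plan is simply to recognise the direct product $\Z \times H$ as a special case of the semi-direct product treated in \autoref{cor: SgSep with semi-direct products}. Indeed, $\Z \times H$ is precisely the splitting extension $\Z \rtimes H$ in which $H$ acts trivially on $\Z$; here the normal factor $N = \Z$ is finitely generated, $N \cap H = \{1\}$, and $NH = G$. As already recorded in the text immediately before this corollary, $\Z$ is ERF --- it is LERF and all of its subgroups are finitely generated --- and $H$ is LERF by hypothesis. Hence \autoref{thm: SgSep with semi-direct products}, applied with this choice of $N$ and $H$, yields at once that $G = \Z \times H$ is LERF. (Equivalently, one may invoke \autoref{cor: SgSep with semi-direct products} as a black box, since a direct product is in particular a semi-direct product.)

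There is essentially no obstacle to overcome: the entire content is carried by \autoref{thm: SgSep with semi-direct products}, and the only point to check is the purely formal observation that a direct product is a splitting extension with trivial action, so that the hypotheses of that theorem are met verbatim. It is worth emphasising only that this innocuous corollary is exactly the ``very useful tool'' advertised in the introduction: it is what will later let us pass from a LERF building block to its product with $\Z$ when assembling LERF T-RAAGs from their defining mixed graphs.
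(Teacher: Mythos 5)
Your proof is correct and matches the paper's own derivation exactly: the paper also obtains this corollary by observing that a direct product is a semi-direct product with trivial action and invoking \autoref{cor: SgSep with semi-direct products} (equivalently, \autoref{thm: SgSep with semi-direct products} with $N = \Z$ ERF). Nothing further to add.
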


\subsection{Reidemeister-Schreier procedure}\label{R-S}

There is an algorithm, the {\it Reidemeister-Schreier procedure}, that given a presentation of a group~$G$ and sufficient information about a subgroup~$H$ of~$G$, provides a presentation for the subgroup~$H$, see \citep[Section 4 of Chapter II]{lyndon1977combinatorial}. 
\begin{definition}
Let~$F = F(X)$ be a free group with basis~$X$ and~$K$ a subgroup of~$F$. A {\it Schreier transversal} for~$K$ in~$F$, is a subset~$T$ of~$F$ which is a {\it right transversal}, i.e.
\begin{itemize}[itemsep=4pt,parsep=0pt,topsep=0pt, partopsep=0pt]
	\item the union~$\underset{t \in T}{\bigcup}Kt$ is equal to~$F$, and
	\item $Kt_1 \neq Kt_2$ for~$t_1 \neq t_2$ in~$T$, 
\end{itemize}
and furthermore, any initial segment (or prefix) of an element of~$T$ belongs to~$T$ as well. 
\end{definition}

In \cite{lyndon1977combinatorial}, the procedure is given like this:
\begin{itemize}[itemsep=4pt,parsep=0pt,topsep=0pt, partopsep=0pt]
	\item the group~$G$ is given by a presentation~$\gp{X}{R}$.
	\item $\pi: F(X) \to G$ is the canonical epimorphism from the free group~$F = F(X)$ onto~$G$. 
	\item $T$ is a Schreier transversal for~$K = \pi^{-1}(H)$ in~$F(X)$.
	\item The map~$\; \bar{\;}\colon F \to T$ maps~$w \in F$ to its coset representative~$\overline{w} \in T$.
\end{itemize}

Then,~$Y = \{tx(\overline{tx})^{-1}\mid t\in T,\; x\in X,\; tx \not\in T \}$ gives a set of generators for~$H$.

Furthermore, there is a map~$\tau \colon F(X) \to F(Y)$ that rewrites each word~$w\in \pi^{-1}(H)$ in terms of generators of~$Y$. Define
$$S = \{\tau(trt^{-1})\mid t\in T, r\in R\}.$$
Then~$\gp{Y}{S}$ is a presentation for~$H$. We refer to~$T$ as Schreier transversal for~$H$ in~$G$.

\section{Subgroup separability for RAAGs}\label{Subgroup separability for RAAGs}
In this section we review some properties of RAAGs, and in the next two subsections we provide a summary of the proof of the following result.
\begin{theorem}[{\citealp[Theorem 2]{metaftsis2008profinite}}]\label{thm: main theorem for RAAGs}
A RAAG~$A(\Gamma)$ is subgroup separable if and only if~$\Gamma$ does not contain~$P_4$ or~$C_4$ as induced subgraphs; equivalently, if and only if~$A(\Gamma)$ does not contain~$A(P_4)$ or~$A(C_4)$ as subgroups.
\end{theorem}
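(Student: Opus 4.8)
The plan is to prove both directions of the equivalence, and to reduce the second ("equivalently") clause to the first. The forward direction is the easy one: if $\Gamma$ contains $P_4$ or $C_4$ as an induced subgraph, then $A(P_4)=L$ or $A(C_4)\cong F_2\times F_2$ embeds as a subgroup of $A(\Gamma)$ (this is the standard fact that an induced subgraph of $\Gamma$ gives a retract, hence an embedded subgroup, of $A(\Gamma)$); since neither $L$ nor $F_2\times F_2$ is LERF --- the latter by \autoref{cor: A(C_4) is not LERF}, the former by the classical argument of Niblo--Wise, which exhibits a finitely generated non-separable subgroup --- \autoref{thm: SgSep with subgroups and finite extensions} forces $A(\Gamma)$ to fail LERF. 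So the content is entirely in the converse.

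For the converse, the plan is to show that a graph $\Gamma$ with no induced $P_4$ and no induced $C_4$ builds $A(\Gamma)$ out of LERF pieces by operations that preserve LERF. The key graph-theoretic input is a structure theorem: a finite simplicial graph omitting induced $P_4$ and $C_4$ decomposes so that $A(\Gamma)$ can be constructed by iterating (a) free products (handled by \autoref{thm: SgSep with free products}) and (b) direct products with $\Z$ (handled by \autoref{cor: SgSep with direct products}), starting from $A(\text{point})=\Z$ and $A(\emptyset\text{-edge graphs})=$ free groups. Concretely, I would induct on $|V\Gamma|$: if $\Gamma$ is disconnected, $A(\Gamma)$ is a free product of the $A(\Gamma_i)$ over components and we apply the inductive hypothesis plus \autoref{thm: SgSep with free products}; if $\Gamma$ is connected, the absence of induced $P_4$ and $C_4$ should force the existence of a vertex $v$ adjacent to \emph{all} other vertices (a dominating/universal vertex), so that $\Gamma = v * \Gamma'$ is a cone, whence $A(\Gamma)\cong \Z\times A(\Gamma')$, and we finish by induction and \autoref{cor: SgSep with direct products}. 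Establishing this dichotomy --- connected $+\,$no induced $P_4$, $C_4$ $\Rightarrow$ has a universal vertex --- is the crux; it is a known fact (these are essentially the "trivially perfect" / cograph-with-extra-condition graphs), and I would prove it by taking a vertex $v$ of maximum degree and arguing that any non-neighbour of $v$ would complete an induced $P_4$ or $C_4$ together with $v$, its neighbourhood, and a shortest path out.

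Finally, the second equivalence --- that $A(\Gamma)$ is LERF iff it contains neither $A(P_4)$ nor $A(C_4)$ --- follows by combining the two directions already proved with the graph-to-subgroup correspondence: containing $A(P_4)$ or $A(C_4)$ as a subgroup is equivalent to containing $P_4$ or $C_4$ as an induced subgraph. The "if" part of this correspondence is the retract argument above; the "only if" part uses the deeper fact (due to Kambites, or Kim--Koberda, on which graphs' RAAGs embed in which) that if $A(P_4)$ or $A(C_4)$ embeds in $A(\Gamma)$ then the corresponding graph already appears as an induced subgraph of $\Gamma$, which I would cite rather than reprove.

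I expect the main obstacle to be the universal-vertex structure lemma for connected $\{P_4,C_4\}$-free graphs: getting a clean inductive handle requires care that the two forbidden subgraphs together (not just $P_4$, which alone gives cographs without a universal vertex) genuinely force a cone structure, and one must be careful that the sub-cases ($\Gamma'$ possibly disconnected, possibly a single vertex, possibly edgeless) all feed back into the induction correctly.
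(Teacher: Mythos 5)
Your proposal is correct and follows essentially the same route as the paper: the forward direction via induced subgraphs giving subgroups plus the non-separability of $A(P_4)$ (Niblo--Wise) and $A(C_4)$ (Mikhailova), the converse via the decomposition of connected $\{P_4,C_4\}$-free graphs as cones (universal vertex) and disconnected ones as free products, closing under \autoref{thm: SgSep with free products} and \autoref{cor: SgSep with direct products}, and the second equivalence via the embedding results of Kambites and Kim--Koberda. The only cosmetic difference is that you propose to prove the universal-vertex lemma directly, where the paper cites it (Droms; Lohrey--Steinberg).
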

\subsubsection*{RAAG subgroups via graphs}
In this part, we only deal with the second equivalence in \autoref{thm: main theorem for RAAGs}, namely, showing that: \\
{\bf Goal.} A RAAG~$A(\Gamma)$ contains~$A(P_4)$ or~$A(C_4)$ as subgroups, if and only if~$\Gamma$ contains~$P_4$ or~$C_4$ respectively, as induced subgraphs.

One direction of our goal is clear, because if~$\Delta$ is an induced subgraph of the simplicial graph~$\Gamma$, then~$A(\Delta)$ is a subgroup of~$A(\Gamma)$. \\
As a converse to the statement above, one has the following two results: 
\begin{itemize}[topsep=0pt,itemsep = 0pt]
\item[$\bullet$] \cite[Corollary 3.8]{kambites2009commuting} shows that if~$A(\Gamma)$ contains a subgroup isomorphic to~$A(C_4)$, then~$\Gamma$ contains an induced square~$C_4$.
\item[$\bullet$] \cite[Theorem 1.7]{kim2013embedability} shows that if~$A(P_4) \xhookrightarrow{} A(\Gamma)$ is a group embedding, then~$\Gamma$ contains an induced path~$P_4$.
\end{itemize}
\begin{remark}
Note that none of the groups~$A(P_4)$ and~$A(C_4)$ can be  a subgroup of the other, because none of graphs~$P_4$ and~$C_4$ is an induced subgraph of the other.
\end{remark}
Summarising the results above, we obtain the other direction of the goal as well, as desired.

\subsubsection*{Separable RAAGs = Elementary RAAGs}

Word problem is one of the fundamental algorithmic questions in algebra; for a group~$G$, generated by a set~$A$, the \emph{word problem} asks whether~$w \in (A \cup A^{-1})^{\ast}$ is equal to~$1_G$ in~$G$.
This problem is solved for T-RAAGs in \citep{foniqi2022, foniqi2024twisted}.

A more general problem is the \emph{subgroup membership problem} (know also as the \emph{generalised word problem}) for a group~$G$, which asks for the existence of an algorithm that takes as input an element~$g \in G$ and a ﬁnitely generated subgroup~$H \leqslant G$ and outputs a YES -- NO answer whether~$g$ belongs to~$H$ or not. This is still open for RAAGs, hence for T-RAAGs as well. There is a useful connection between being subgroup separable and having decidable subgroup membership problem.

\begin{remark}[{\citealp{mal1958homomorphisms}}]
For ﬁnitely presented groups, being subgroup separable implies the solvability of the generalised word problem.
\end{remark}

In \citep{mikhailova1966occurrence} it is shown that~$A(C_4) = F_2 \times F_2$ contains a subgroup where membership is undecidable. 

\begin{cor}\label{cor: A(C_4) is not LERF}
The group~$A(C_4)$ is not subgroup separable.
\end{cor}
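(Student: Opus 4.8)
The plan is to derive the non-separability of $A(C_4)$ by contradiction, combining the two facts assembled just above the statement. First I would record that $A(C_4) \simeq F_2 \times F_2$ is finitely presented, being a direct product of two copies of the finitely presented group $F_2$. This is the hypothesis needed to invoke Mal'cev's observation.

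Next, I would argue as follows. Suppose, for contradiction, that $A(C_4)$ is subgroup separable. Since $A(C_4)$ is finitely presented, the remark attributed to \citet{mal1958homomorphisms} applies, and so $A(C_4)$ has solvable generalised word problem; that is, for every finitely generated subgroup $H \leqslant A(C_4)$ and every $g \in A(C_4)$ one can decide whether $g \in H$. But \citet{mikhailova1966occurrence} exhibits a finitely generated subgroup $H_0 \leqslant F_2 \times F_2 \simeq A(C_4)$ for which the membership problem is undecidable. These two assertions are incompatible, so the assumption that $A(C_4)$ is subgroup separable must fail.

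There is essentially no technical obstacle here, since both ingredients are imported as black boxes; the only point requiring a little care is to make sure the hypotheses of each cited result are genuinely met — namely that $A(C_4)$ is finitely presented (so that Mal'cev's implication "LERF $\Rightarrow$ solvable generalised word problem" is available) and that Mikhailova's subgroup is finitely generated (so that it is a legitimate instance of the generalised word problem rather than of the harder subgroup membership problem for arbitrary subgroups). Both are standard, so the corollary follows immediately. If one wanted a self-contained account one could instead recall the structure of Mikhailova's subgroup — the fibre product over a finitely presented group with unsolvable word problem — but for the purposes of this paper citing \citet{mikhailova1966occurrence} suffices.
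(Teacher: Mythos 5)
Your argument is exactly the one the paper intends: the corollary is stated immediately after the Mal'cev remark (LERF plus finite presentability implies solvable generalised word problem) and the citation of Mikhailova's finitely generated subgroup of $F_2 \times F_2$ with undecidable membership, and the conclusion follows by combining the two. Your additional care about checking that $A(C_4)$ is finitely presented and that Mikhailova's subgroup is finitely generated is correct and harmless, so the proposal is complete and matches the paper's approach.
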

On the other hand~$A(P_4)$ has decidable subgroup membership problem, see \citep*[Corollary 1.3]{kapovich2005foldings}, as~$P_4$ is a chordal graph. However, \cite*{niblo2001subgroup} show that~$A(P_4)$ is not subgroup separable.
\begin{definition}\label{def: elementary RAAGs}
Simplicial graphs not containing~$P_4$ and~$C_4$ are called \emph{transitive forests}. Right-angled Artin groups based on transitive forests are called \emph{elementary RAAGs}.
\end{definition}
\begin{remark}\label{rem: elementary RAAGs}
The result \citep[Lemma 2]{lohrey2008submonoid} shows that the class of elementary RAAGs, denoted by~$\mathcal{A}(\mathcal{E})$, is described precisely with the following properties:
\begin{itemize}
\item[(i)] The trivial group~$\{1\}$ belongs to~$\mathcal{A}(\mathcal{E})$,
\item[(ii)] If~$E \in \mathcal{A}(\mathcal{E})$, then~$E \times \Z \in \mathcal{A}(\mathcal{E})$,
\item[(iii)] If~$E_1, E_2 \in \mathcal{A}(\mathcal{E})$ then~$E_1 \ast E_2 \in \mathcal{A}(\mathcal{E})$.
\end{itemize}
\end{remark}

In particular, the results above demonstrate that the class of RAAGs that are LERF is a subclass of elementary RAAGs.

On the other hand, the class of elementary RAAGs (see \autoref{rem: elementary RAAGs}) has nice closure properties for free products, and for direct products with~$\Z$.
By closure properties of subgroup separability under direct products with~$\Z$ given by \autoref{cor: SgSep with direct products}, and free products given by \autoref{thm: SgSep with free products}, it follows that elementary RAAGs are subgroup separable, providing ultimately the proof of \autoref{thm: main theorem for RAAGs}.

\section{Going from RAAGs to T-RAAGs}\label{Going from RAAGs to T-RAAGs}

When extending previous results from RAAGs to the class
of T-RAAGs, which contains RAAGs, one searches for the 'poisonous pieces'~$A(P_4)$ and~$A(C_4)$ in the larger class. Since being subgroup separable
is inherited by subgroups and it fails for~$A(P_4)$ and~$A(C_4)$, it fails for a T-RAAG~$G$
as well whenever~$G$ contains~$A(P_4)$ or~$A(C_4)$.

We generalise the results from RAAGs to T-RAAGs by answering the following question.
\begin{question}\label{question: about containing P_4 and C_4}
Which T-RAAGs contain~$A(P_4)$ or~$A(C_4)$ as subgroups?
\end{question}
We will answer one part of this question
by applying the normal forms in T-RAAGs coming from \citep{foniqi2022, foniqi2024twisted}, which yield the following two results:

\begin{lemma}\label{lem: subgraph injectivity}
Let~$\Delta$ be an induced subgraph of the mixed graph~$\Gamma$. The morphism
$$i: T(\Delta) \longrightarrow T(\Gamma) \text{ induced by } v \mapsto v,$$
sends normal forms to normal forms, hence it is injective. In particular,~$T(\Delta) \leqslant T(\Gamma)$.
\end{lemma}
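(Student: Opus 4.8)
The plan is to reduce the statement to a property of normal forms. First I would recall the normal form theory for T-RAAGs developed in \citep{foniqi2022, foniqi2024twisted}: every element of $T(\Gamma)$ has a canonical representative obtained by a confluent rewriting system built from the defining relations, where the moves are the commutations $ab=ba$ for $[a,b]\in\overline{E\Gamma}$ and the Klein moves associated to $aba=b$ for $[a,b\rangle\in\overrightarrow{E\Gamma}$ (together with free reductions). Crucially, all of these rewriting moves are \emph{local} in the sense that they only involve generators $u,v$ that are joined by an edge of $\Gamma$, and such an edge lies in the induced subgraph $\Delta$ precisely when both its endpoints lie in $V\Delta$.

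The key observation is then the following: let $w$ be a word in $(V\Delta\cup V\Delta^{-1})^{\ast}$. Since $\Delta$ is \emph{induced}, any generator $v$ appearing in $w$ has $v\in V\Delta$, and any defining relation of $T(\Gamma)$ that can be applied to $w$ involves only letters already present in $w$, hence only letters of $V\Delta$, and the corresponding edge of $\Gamma$ has both endpoints in $V\Delta$ — so by induced-ness it is already an edge (of the same type) in $\Delta$, i.e. it is a defining relation of $T(\Delta)$. Consequently the rewriting system of $T(\Gamma)$, restricted to words over $V\Delta\cup V\Delta^{-1}$, coincides with the rewriting system of $T(\Delta)$; in particular the set of words over $V\Delta$ is closed under the rewriting moves, and the $T(\Gamma)$-normal form of such a word is a word over $V\Delta$ and equals its $T(\Delta)$-normal form. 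I would phrase this as: the normal form map for $T(\Gamma)$ sends words over $V\Delta\cup V\Delta^{-1}$ to (the same as) normal forms for $T(\Delta)$, which is exactly the claim ``$i$ sends normal forms to normal forms''.

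Injectivity of $i$ is then immediate: if $u,v$ are two normal-form words over $V\Delta$ representing the same element of $T(\Gamma)$, then since normal forms in $T(\Gamma)$ are unique and $u,v$ are already $T(\Gamma)$-normal, we get $u=v$ as words, hence they represent the same element of $T(\Delta)$; equivalently $\ker i$ is trivial because any $g\in\ker i$ has a $T(\Delta)$-normal form $w$ which is also its $T(\Gamma)$-normal form, and $i(g)=1$ forces $w$ to be the empty word. Therefore $i$ is a monomorphism and $T(\Delta)\leqslant T(\Gamma)$ via $v\mapsto v$.

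\textbf{Main obstacle.} The only real content lies in having a sufficiently robust normal form / confluent rewriting system for arbitrary T-RAAGs and in checking that its moves are genuinely ``edge-local'' — i.e. that no rewriting step of $T(\Gamma)$ applied to a word over $V\Delta$ can introduce a letter outside $V\Delta$ or invoke a relation not present in $T(\Delta)$. With the Klein relations $aba=b$ this requires a little care (applying $aba=b$ to a subword does change the letters, replacing $aba$ by $b$ or vice versa, but all letters involved remain in $\{a,b\}\subseteq V\Delta$), but it is exactly the kind of statement the cited normal-form results are designed to support, so I expect the argument to go through cleanly once that machinery is quoted. The subtlety to flag explicitly is the use of \emph{induced} subgraph: it is what guarantees that an edge of $\Gamma$ between two vertices of $\Delta$ is an edge of $\Delta$ \emph{of the same type} (undirected vs.\ directed, and with the same orientation), which is what makes the restricted rewriting system literally equal to that of $T(\Delta)$.
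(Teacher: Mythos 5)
Your argument is correct and matches the paper's approach: the paper derives this lemma directly from the normal-form machinery of \citep{foniqi2022, foniqi2024twisted} with no further detail, exactly as you do. Your key observation --- that induced-ness forces any edge of $\Gamma$ between two vertices of $\Delta$ to be an edge of $\Delta$ of the same type and orientation, so the edge-local rewriting moves applied to words over $V\Delta^{\pm 1}$ are precisely those of $T(\Delta)$ and normal forms are computed identically in both groups --- is exactly the content being invoked.
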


\begin{lemma}\label{lem: square injectivity}
The map~$q: A(\overline{\Gamma}) \longrightarrow T(\Gamma)$ given by~$v \mapsto v^2$ sends normal forms to normal forms, hence it is injective.
\end{lemma}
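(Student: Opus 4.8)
I would split the proof into a well-definedness step and a normal-form step, leaning on \autoref{lem: subgraph injectivity} for the first and on the normal form calculus for T-RAAGs of \citep{foniqi2022, foniqi2024twisted} for the second.

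\emph{Well-definedness of $q$.} Since the only relations of $A(\overline{\Gamma})$ are the commutations $uv = vu$ for edges of $\overline{\Gamma}$, it suffices to check that $u^2v^2 = v^2u^2$ holds in $T(\Gamma)$ for every edge $\{u,v\}$ of $\overline{\Gamma}$. If the edge is undirected this is immediate, as $u$ and $v$ already commute. If $\{u,v\}$ carries a directed edge, then by \autoref{lem: subgraph injectivity} the subgroup $\gen{u,v} \leqslant T(\Gamma)$ is the Klein bottle group $\gp{a}{}\rtimes\gp{b}{} $ (with $b$ inverting $a$ under conjugation), and a one-line computation shows that the square of the terminus is central in this group; hence $u^2$ and $v^2$ commute. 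So $q$ is a homomorphism.

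\emph{$q$ carries reduced words to reduced words.} Recall that an element of $A(\overline{\Gamma})$ has a reduced expression $w = g_1\cdots g_n$ with $g_i \in V \cup V^{-1}$, unique up to shuffling adjacent commuting letters. Let $\tilde{w} = g_1^{\,2}\cdots g_n^{\,2}$ be the word over the doubled alphabet $\{v^{\pm 2} : v\in V\}$ obtained by squaring each letter (keeping signs), so that $q$ sends the class of $w$ to the class of $\tilde{w}$. I would prove that $\tilde{w}$ is already reduced in $T(\Gamma)$ and, more precisely, that the only rewriting moves of $T(\Gamma)$ applicable to $\tilde w$ are the doubled analogues of the shuffles available to $w$. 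The local facts behind this, all established inside a Klein-bottle subgroup $\gen{u,v}$ arising from a directed edge $[u,v\rangle$, are: the square $v^2$ of the terminus commutes with any power of $u$; moving the square $u^2$ of the origin past a single $v^{\pm1}$ inverts it; hence moving $u^2$ past $v^2$ inverts it twice and is thus an ordinary commutation. Consequently, in any rewriting of $\tilde w$ the sign flips produced by Klein moves occur in compensating pairs, and the net effect is exactly a commutation shuffle of $w$ followed by doubling. In particular a cancellation $v^2\cdots v^{-2}$ could be exposed in $\tilde w$ only if $v\cdots v^{-1}$ were exposable in $w$; as $w$ is reduced, $\tilde w$ is reduced. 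Two reduced words of $T(\Gamma)$ represent the same element iff they are connected by these moves, so distinct reduced classes $w\neq w'$ give $q([w])\neq q([w'])$; choosing shortlex representatives with respect to a fixed ordering of $V$ used for both groups yields $q\bigl(\mathrm{NF}_{A(\overline{\Gamma})}(g)\bigr) = \mathrm{NF}_{T(\Gamma)}\bigl(q(g)\bigr)$, hence injectivity.

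\emph{Main obstacle.} The delicate point is the bookkeeping in the second step. A single Klein move temporarily breaks a doubled block $v^{\pm2}$ into the pattern $v^{\pm1}u^{\mp2}v^{\pm1}$, so intermediate words along a rewriting sequence need not be doubled words, and one must argue that any sequence returning to a reduced word realizes its sign flips in cancelling pairs, i.e. extract the genuine \emph{net effect} of an arbitrary rewriting. Making this rigorous — either by isolating an invariant that tracks, per generator, the parity of accumulated sign flips, or by running the argument entirely inside the normal form machinery of \citep{foniqi2024twisted} rather than through ad hoc moves — is where the real work lies; the rest is routine verification in Klein-bottle subgroups supplied by \autoref{lem: subgraph injectivity}.
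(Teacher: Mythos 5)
The paper itself does not supply an argument for this lemma: it is stated as a consequence of the normal-form machinery of the cited references \citep{foniqi2022, foniqi2024twisted}, with the RAAG case attributed to \citep[Corollary 2]{crisp2001solution}. So there is no in-paper proof to match your proposal against; what you propose is the natural way to extract the lemma from those normal forms, and your well-definedness step is correct (for a directed edge $[u,v\rangle$ one has $vuv^{-1}=u^{-1}$, so $\gen{u}$ is normal with the terminus acting by inversion, the square of the terminus is central, and in particular $u^2v^2=v^2u^2$).

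The gap you flag at the end, however, is the entire content of the lemma, and as written your proposal does not close it. The good news is that it can be closed, and more cheaply than by tracking arbitrary rewriting sequences. First, injectivity of a homomorphism only requires a trivial kernel, so it suffices to show that the double $\tilde w$ of a nonempty reduced word $w$ is again reduced; you do not need the stronger shortlex statement $q(\mathrm{NF}(g))=\mathrm{NF}(q(g))$. Second, use the static cancellation criterion from the normal-form theory (the twisted analogue of Servatius' criterion) rather than dynamic rewriting: a word is non-reduced iff it contains a cancelling pair $v^{\alpha},v^{\beta}$ such that every letter strictly between them is adjacent to $v$ in $\overline{\Gamma}$ and the sign of $v^{\alpha}$, after being flipped once for each intervening terminus of a directed edge with origin $v$, is opposite to $\beta$. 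Take such a pair in $\tilde w$ that is innermost, i.e.\ with no occurrence of $v^{\pm1}$ strictly between. The two copies inside a single block $v^{2\epsilon}$ are adjacent with equal signs, so they never cancel; hence an innermost pair consists of the second copy of one block $v^{2\epsilon_i}$ and the first copy of the next $v$-block $v^{2\epsilon_j}$. Everything strictly between is a union of complete doubled blocks $u^{2\delta}$ with $u\neq v$, so every generator occurs there with even multiplicity and the total number of sign flips is even. The pair therefore cancels in $\tilde w$ iff $\epsilon_i=-\epsilon_j$ and all intervening letters of $w$ are adjacent to $v$, i.e.\ iff the corresponding pair already cancels in $w$ — contradicting reducedness of $w$. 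This is exactly your "compensating pairs" remark made precise; without the reduction to an innermost pair and the even-multiplicity observation, the argument is not yet a proof.
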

Note that for RAAGs, the last lemma comes from the classical result \citep[Corollary 2]{crisp2001solution}. In particular, one has:
\begin{cor}\label{cor: A(P_4) and A(C_4) injectivity}
If~$\overline{\Gamma}$ contains a path on~$4$ vertices, or a square on~$4$ vertices, then~$T(\Gamma)$ contains~$A(P_4)$ or~$A(C_4)$ respectively, in which case~$T(\Gamma)$ is not subgroup separable.
\end{cor}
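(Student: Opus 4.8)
The statement is essentially an immediate consequence of the two preceding lemmas together with the separability facts already recorded, so the proof will be short and the work is mostly in assembling the pieces correctly. The plan is as follows. Suppose first that $\overline{\Gamma}$ contains $C_4$ as an induced subgraph. Then $A(C_4)$ embeds into $A(\overline{\Gamma})$ (this is the easy direction already noted in Section \ref{Subgroup separability for RAAGs}: an induced subgraph of a simplicial graph gives a subgroup inclusion of the corresponding RAAGs), and by \autoref{lem: square injectivity} the map $q\colon A(\overline{\Gamma}) \hookrightarrow T(\Gamma)$, $v \mapsto v^2$, is injective. Composing, we obtain an embedding $A(C_4) \hookrightarrow T(\Gamma)$. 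Now apply \autoref{cor: A(C_4) is not LERF}, which says $A(C_4)$ is not subgroup separable, together with the first bullet of \autoref{thm: SgSep with subgroups and finite extensions} (if a subgroup fails to be LERF, so does the ambient group): hence $T(\Gamma)$ is not subgroup separable.

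For the $P_4$ case, the argument is the same with $C_4$ replaced by $P_4$: if $\overline{\Gamma}$ contains $P_4$ as an induced subgraph then $A(P_4) \leqslant A(\overline{\Gamma})$, and again \autoref{lem: square injectivity} yields $A(P_4) \hookrightarrow T(\Gamma)$ via $v \mapsto v^2$. Since $A(P_4)$ is not subgroup separable (stated in the excerpt, citing \citealp{niblo2001subgroup}), \autoref{thm: SgSep with subgroups and finite extensions} again forces $T(\Gamma)$ not to be subgroup separable. One should remark that \autoref{lem: square injectivity} is exactly what makes the passage from $A(\overline{\Gamma})$ to $T(\Gamma)$ work uniformly, and that in the case $\overrightarrow{E\Gamma} = \emptyset$ one could alternatively invoke \autoref{lem: subgraph injectivity} directly, but the squaring map handles the general mixed case without needing the poison subgraph to consist of undirected edges in $\Gamma$ itself.

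The only genuine subtlety — and the step I would flag as the place to be careful rather than a true obstacle — is the claim that $A(C_4)$, respectively $A(P_4)$, actually embeds into $A(\overline{\Gamma})$ when the corresponding graph appears as an \emph{induced} subgraph of $\overline{\Gamma}$. This is standard for RAAGs (the retraction onto a full subgraph splits the inclusion), and it is precisely the "one direction is clear" remark made earlier in the discussion of the Goal in Section \ref{Subgroup separability for RAAGs}; so I would simply cite that. Everything else is formal: a finite composition of injective group homomorphisms is injective, and non-LERF-ness propagates upward to overgroups. No delicate normal-form computation is needed here since that work has been absorbed into \autoref{lem: subgraph injectivity} and \autoref{lem: square injectivity}.
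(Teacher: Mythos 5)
Your proposal is correct and follows exactly the route the paper intends: compose the standard induced-subgraph embedding $A(P_4),A(C_4)\leqslant A(\overline{\Gamma})$ with the squaring embedding of \autoref{lem: square injectivity} into $T(\Gamma)$, then propagate non-separability of the poisonous subgroups upward via \autoref{thm: SgSep with subgroups and finite extensions}. The paper states the corollary as an immediate consequence of \autoref{lem: square injectivity} without writing the argument out, so your assembly of the pieces (including reading ``contains'' as ``contains as an induced subgraph'') matches its implicit proof.
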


\begin{remark}
The result of this corollary follows also from \cite[Theorem 2]{pride1986tits}, because T-RAAGs are "generalized" Artin groups with their convention, and they satisfy the~"$C(4)$ condition". Moreover, paths on~$4$ vertices, or a square on~$4$ vertices, are triangle-free, so they satisfy their~"$T(4)$ condition" as well. Therefore if~${\Gamma}$ contains a graph~$\Delta$ which is a path on~$4$ vertices, or a square on~$4$ vertices, then~$T(\Gamma)$ contains~$T(\Delta)$ by \autoref{lem: subgraph injectivity}; by \cite[Theorem 2]{pride1986tits}  the squares of vertices of~$\Delta$ generate a RAAG isomorphic to~$A(\overline{\Delta})$, in which case~$T(\Gamma)$ is not subgroup separable.
\end{remark}

\begin{cor}\label{cor: A not LERF implies T not LERF}
Let~$\Gamma$ be a mixed graph. If~$A(\overline{\Gamma})$ is not subgroup separable, then~$T(\Gamma)$ is not subgroup separable as well.
\end{cor}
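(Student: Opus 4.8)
The plan is to reduce the statement to the two results that precede it, with essentially no extra work. The key observation is that ``$A(\overline{\Gamma})$ is not subgroup separable'' is not just a group-theoretic hypothesis but, via \autoref{thm: main theorem for RAAGs}, a purely combinatorial one about the graph $\overline{\Gamma}$.

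First I would invoke \autoref{thm: main theorem for RAAGs} (Metaftsis--Raptis): since $A(\overline{\Gamma})$ is not subgroup separable, the simplicial graph $\overline{\Gamma}$ must contain an induced subgraph isomorphic to $P_4$ or to $C_4$. Next, I would feed this into \autoref{cor: A(P_4) and A(C_4) injectivity}: because $\overline{\Gamma}$ contains a path on $4$ vertices or a square on $4$ vertices as an induced subgraph, the group $T(\Gamma)$ contains a subgroup isomorphic to $A(P_4)$ or to $A(C_4)$ respectively. (Under the hood this is \autoref{lem: square injectivity} composed with \autoref{lem: subgraph injectivity}, but both are already available.) Finally, $A(C_4) \cong F_2 \times F_2$ is not subgroup separable by \autoref{cor: A(C_4) is not LERF}, and $A(P_4) = L$ is not subgroup separable by the cited result of Niblo--Wise; so by the first bullet of \autoref{thm: SgSep with subgroups and finite extensions} (LERF is inherited by subgroups), $T(\Gamma)$ is not subgroup separable either. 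This closes the argument.

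There is no real obstacle here: the corollary is a direct consequence of \autoref{cor: A(P_4) and A(C_4) injectivity} together with the graph-theoretic characterisation of LERF RAAGs, and the substantive work has already been carried out in establishing the injectivity lemmas and in recalling the Metaftsis--Raptis theorem. The only thing worth stating carefully is that the hypothesis on $A(\overline{\Gamma})$ is being used exactly through \autoref{thm: main theorem for RAAGs}, so the statement genuinely depends on that theorem and is not, say, a formal consequence of the existence of a homomorphism $T(\Gamma) \to A(\overline{\Gamma})$ (which need not exist).
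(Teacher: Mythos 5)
Your argument is correct and is exactly the route the paper intends (the corollary is stated without proof as an immediate consequence of \autoref{thm: main theorem for RAAGs} and \autoref{cor: A(P_4) and A(C_4) injectivity}): the non-separability of $A(\overline{\Gamma})$ forces an induced $P_4$ or $C_4$ in $\overline{\Gamma}$, which embeds $A(P_4)$ or $A(C_4)$ into $T(\Gamma)$ via the squaring map, and LERF passes to subgroups. Your closing remark that the hypothesis is used only through the combinatorial characterisation, not through any homomorphism $T(\Gamma)\to A(\overline{\Gamma})$, is a worthwhile clarification.
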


To answer the other part of \autoref{question: about containing P_4 and C_4} we show that if~$\Gamma$ is a mixed graph such that its underlying simplicial graph~$\overline{\Gamma}$ does not contain neither~$P_4$ nor~$C_4$, then~$T(\Gamma)$ is subgroup separable; see \autoref{thm: main theorem}.
In particular,~$T(\Gamma)$ cannot contain~$A(P_4)$ or~$A(C_4)$ in this case.

\subsection{Subgroup Separability for T-RAAGs}\label{SgSep for T-RAAGs}
\begin{theorem}\label{thm: main theorem}
Let~$\Gamma$ be a mixed graph. Then~$T(\Gamma)$ is subgroup separable if and only if the underlying simplicial graph~$\overline{\Gamma}$ does not contain~$P_4$ and~$C_4$ as induced subgraphs.
\end{theorem}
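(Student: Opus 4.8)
The plan is to prove the theorem by combining the two directions that have already been set up in the excerpt. The forward direction (non-separability) is essentially done: if $\overline{\Gamma}$ contains $P_4$ or $C_4$ as an induced subgraph, then by \autoref{cor: A(P_4) and A(C_4) injectivity} the group $T(\Gamma)$ contains $A(P_4)$ or $A(C_4)$ as a subgroup, and since subgroup separability is inherited by subgroups (\autoref{thm: SgSep with subgroups and finite extensions}) and fails for these two groups (\autoref{cor: A(C_4) is not LERF} and the Niblo--Wise result for $A(P_4)$), $T(\Gamma)$ is not subgroup separable. So the whole content of the theorem is the converse: if $\overline{\Gamma}$ is a transitive forest (contains neither $P_4$ nor $C_4$ as an induced subgraph), then $T(\Gamma)$ is subgroup separable.

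For the converse, I would mimic the structure of the RAAG proof (the recursive description of elementary RAAGs in \autoref{rem: elementary RAAGs}, combined with \autoref{cor: SgSep with direct products} and \autoref{thm: SgSep with free products}), but carried out at the level of mixed graphs. The first step is a structural lemma: a mixed graph $\Gamma$ whose underlying graph $\overline{\Gamma}$ is a transitive forest can be built up from the trivial graph by the operations (a) disjoint union of mixed graphs, which on the group side gives free products, and (b) coning — adding a single new vertex $v$ joined to every vertex of a previously-constructed mixed graph $\Delta$ by an edge (undirected, or directed into or out of $v$). This is exactly the recursive structure of transitive forests, lifted to mixed graphs by recording, for each cone vertex and each edge, whether the edge is undirected or directed and in which direction. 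The second step is to analyse the group-theoretic effect of a single cone operation: if $v$ is coned onto $\Delta$, then $T(\Gamma)$ is generated by $v$ together with $T(\Delta)$, and every generator of $T(\Delta)$ either commutes with $v$ or satisfies the Klein relation $v a v = a$ (equivalently $v a v^{-1} = a^{-1}$). In either case conjugation by $v$ is an automorphism of $T(\Delta)$ (it acts on each generator by $a \mapsto a^{\pm 1}$), so $T(\Gamma)$ is a semidirect product $\langle v\rangle \ltimes T(\Delta) \cong \mathbb{Z} \ltimes T(\Delta)$; here one must check that $T(\Delta)$ is genuinely a normal subgroup and $v$ has infinite order, which follows from the normal-form/injectivity results \autoref{lem: subgraph injectivity} (there is no relation making $v$ torsion). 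Then by induction on the number of vertices: disjoint unions are handled by \autoref{thm: SgSep with free products}, and cone operations by \autoref{cor: SgSep with semi-direct products} applied to $\mathbb{Z} \ltimes T(\Delta)$ with $T(\Delta)$ LERF by induction. The base case is the trivial group.

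The main obstacle I anticipate is the second step: verifying cleanly that coning a single vertex really does produce a semidirect product $\mathbb{Z} \ltimes T(\Delta)$. One has to be careful that the new vertex $v$ together with all its incident relations does not collapse the structure — i.e. that $T(\Delta) \hookrightarrow T(\Gamma)$ (which is \autoref{lem: subgraph injectivity}), that $v$ has infinite order in $T(\Gamma)$, that $\langle v\rangle \cap T(\Delta) = \{1\}$, and that conjugation by $v$ normalises $T(\Delta)$. The cleanest route is to write down the presentation of $T(\Gamma)$ obtained from that of $T(\Delta)$ by adding $v$ and the relations $v a = a^{\varepsilon(a)} v$ for $\varepsilon(a) \in \{+1\}$ on undirected edges and $va = a^{-1}v$ on directed edges (rearranging $vav = a$), recognise this presentation as exactly the standard presentation of an HNN-type semidirect product $\mathbb{Z}\ltimes_\phi T(\Delta)$ where $\phi$ is the order-dividing-$2$ automorphism $a \mapsto a^{\varepsilon(a)}$, and invoke that $\phi$ is a well-defined automorphism because it respects every defining relation of $T(\Delta)$ (a commutation $ab=ba$ maps to $a^{\pm1}b^{\pm1} = b^{\pm1}a^{\pm1}$, and a Klein relation $aba=b$ maps to $a^{\pm1}b^{\pm1}a^{\pm1} = b^{\pm1}$, each of which is a consequence of the original relation). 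A secondary subtlety is the precise formulation of the structural lemma in Step 1: one should allow the cone edges to be a mixture of the three types and confirm that every transitive-forest mixed graph arises this way, which is immediate from the definition of transitive forest once one observes that the directions on edges are unconstrained data that the construction simply carries along.
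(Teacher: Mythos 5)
Your forward direction and your overall induction scheme (reduce to connected graphs via \autoref{thm: SgSep with free products}, locate a dominating ``cone'' vertex, and induct) coincide with the paper's. The gap is in your Step~2, and it is a genuine one: the claimed equivalence ``$vav=a$ (equivalently $vav^{-1}=a^{-1}$)'' is false. The Klein relation is oriented, and the two orientations behave very differently under conjugation. If the edge is directed \emph{into} the cone vertex, i.e.\ the relation is $ava=v$, then indeed $vav^{-1}=a^{-1}$ and $v$ acts on the generators of $T(\Delta)$ by inversion. But if the edge is directed \emph{out of} the cone vertex, i.e.\ the relation is $vav=a$, then $vav^{-1}=av^{-2}$, which does not lie in $\sgp{V\Delta}=T(\Delta)$ (by the normal forms underlying \autoref{lem: subgraph injectivity}); in that case it is $a$ that inverts $v$, since $ava^{-1}=v^{-1}$. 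Consequently, when the cone vertex carries both an outgoing and an incoming directed edge, \emph{neither} $\sgp{v}$ nor $T(\Delta)$ is normal in $T(\Gamma)$, and no semidirect decomposition of the form you propose exists. There is also a secondary mismatch even in the favourable case where all directed edges point into $v$: your decomposition then has $T(\Delta)$ as the normal factor, whereas \autoref{cor: SgSep with semi-direct products} requires the \emph{normal} factor to be $\Z$ (Allenby's theorem needs the normal subgroup to be ERF, and $T(\Delta)$ being LERF by induction is not enough); one would have to pass to the index-$2$ subgroup $T(\Delta)\times\sgp{v^2}$ and invoke \autoref{cor: SgSep with direct products} together with \autoref{thm: SgSep with subgroups and finite extensions}.

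The paper repairs exactly this point by replacing $T(\Gamma)$ with the index-$2$ normal subgroup $G=\sgp{x^2,\lk(x)}$, where $x$ is the dominating vertex. A Reidemeister--Schreier computation shows that $G$ is again a T-RAAG, on a graph in which $y=x^2$ \emph{commutes} with every vertex $b$ that had a directed edge into $x$ (since $xbx^{-1}=b^{-1}$ forces $x^2bx^{-2}=b$) and retains a Klein relation $ya_iy=a_i$ with the vertices $a_i$ that had edges directed out of $x$; for that orientation $a_iya_i^{-1}=y^{-1}$, so $\sgp{y}\simeq\Z$ \emph{is} normal in $G$. Hence $G\simeq\Z\rtimes T(\Delta\setminus\{y\})$ with $\Z$ as the normal factor, \autoref{cor: SgSep with semi-direct products} applies via the inductive hypothesis, and \autoref{thm: SgSep with subgroups and finite extensions} transfers separability from $G$ back to $T(\Gamma)$. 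To complete your argument you should either adopt this squaring-the-cone-vertex reduction, or at minimum split the cone step into the three orientation cases and justify each one separately; as written, the key step fails for any cone vertex with an outgoing directed edge.
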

\vspace{-0.15cm}
The theorem above is our main contribution on this article. Immediately, one has:
\begin{cor}\label{thm: main corollary}
$T(\Gamma)$ is subgroup separable if and only if~$A(\overline{\Gamma})$ is subgroup separable.
\end{cor}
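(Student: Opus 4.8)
The plan is to obtain the corollary by observing that both separability statements reduce, via characterizations already in hand, to the \emph{same} combinatorial condition on the underlying simplicial graph $\overline{\Gamma}$, and then chaining the two equivalences together. No new machinery is needed; this is a formal consequence of \autoref{thm: main theorem} together with the Metaftsis–Raptis result.

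First I would apply \autoref{thm: main theorem} to $T(\Gamma)$, which gives that $T(\Gamma)$ is subgroup separable precisely when $\overline{\Gamma}$ contains neither an induced $P_4$ nor an induced $C_4$. Next, since $\overline{\Gamma}$ is by construction an ordinary simplicial graph (\autoref{rem: underlying RAAG of a T-RAAG}), I would apply the Metaftsis–Raptis characterization \autoref{thm: main theorem for RAAGs} to the RAAG $A(\overline{\Gamma})$, yielding that $A(\overline{\Gamma})$ is subgroup separable precisely when $\overline{\Gamma}$ contains neither an induced $P_4$ nor an induced $C_4$. The right-hand conditions in these two equivalences coincide verbatim, so transitivity immediately gives that $T(\Gamma)$ is subgroup separable if and only if $A(\overline{\Gamma})$ is subgroup separable.

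The only point requiring care is that the Metaftsis–Raptis theorem must be invoked on the correct object, namely the underlying simplicial graph $\overline{\Gamma}$ rather than on the mixed graph $\Gamma$ itself; this is legitimate precisely because $\overline{\Gamma}$ carries no directed edges, so the classical theorem applies unchanged. Beyond this bookkeeping there is no genuine obstacle: the statement simply records that two prior characterizations share a common graph-theoretic criterion.

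As a remark, I would note that one implication is in fact already available independently of \autoref{thm: main theorem for RAAGs}: the contrapositive of \autoref{cor: A not LERF implies T not LERF} states that if $T(\Gamma)$ is subgroup separable then so is $A(\overline{\Gamma})$. For the reverse implication one then uses that separability of $A(\overline{\Gamma})$ forces $\overline{\Gamma}$ to avoid both $P_4$ and $C_4$ (by \autoref{thm: main theorem for RAAGs}), whence \autoref{thm: main theorem} delivers separability of $T(\Gamma)$. Either packaging closes the argument, and I would present the transitivity route as the primary one for brevity.
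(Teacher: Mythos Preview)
Your proposal is correct and matches the paper's own treatment: the corollary is stated immediately after \autoref{thm: main theorem} with the phrase ``Immediately, one has'', and no further proof is given, the implicit reasoning being exactly the chaining of \autoref{thm: main theorem} with the Metaftsis--Raptis characterization \autoref{thm: main theorem for RAAGs} that you spell out. Your additional remark about deriving one direction via \autoref{cor: A not LERF implies T not LERF} is also consistent with how the paper packages things, since that corollary is precisely the first half of the proof of \autoref{thm: main theorem}.
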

\begin{proof}[Proof of \autoref{thm: main theorem}]
One direction is provided by \autoref{cor: A not LERF implies T not LERF}.

For the other direction, let~$\Gamma$ be a mixed graph, such that~$\overline{\Gamma}$ does not contain~$P_4$ and~$C_4$ as induced subgraphs, i.e.~$\overline{\Gamma}$ is a transitive forest. 
It suffices to show the result when~$\Gamma$ is connected: if~$\Gamma = \Gamma_1 \sqcup \Gamma_2$ is a disjoint union of two mixed graphs, then~$T(\Gamma) = T(\Gamma_1) \ast T(\Gamma_2)$, and subgroup separability is preserved under taking free products (see \autoref{thm: SgSep with free products}).

If~$|V\Gamma| = 1$, then~$A(\Gamma) \simeq \Z$ which is LERF.

If~$|V\Gamma| = 2$, then~$A(\Gamma) \simeq \Z^2$ which is subgroup separable from \autoref{cor: SgSep with direct products}, or~$T(\Gamma) \simeq K$ - Klein bottle group, which is a semi-direct product of two copies of~$Z$, hence is subgroup separable as well from \autoref{cor: SgSep with semi-direct products}. 

Now assume~$|V\Gamma| \geq 3$ and that all T-RAAGs based on graphs with less vertices than~$|V\Gamma|$ are subgroup separable.
Since~$\Gamma$ is connected and~$\overline{\Gamma}$ does not contain both~$P_4$ and~$C_4$, a Lemma in \citep{droms1987subgroups} shows that there is at least one vertex in~$\Gamma$, say~$x$, that is connected to every other vertex of~$\Gamma$. Hence, our graph~$\Gamma$ looks like this:
\begin{figure}[H]
\centering
\begin{tikzpicture}[>={Straight Barb[length=7pt,width=4pt]}, rotate = 22.5, scale = 0.75] 

\draw [dotted,domain=55:80, thick] plot ({2*cos(\x)}, {2*sin(\x)});

\draw[fill=black] (0,0) circle (1.5pt) node[below] {};
\draw[fill=black] (-0.07,-0.07) circle (0pt) node[below] {$x$};

\draw[fill=black] ({2*cos(90)}, {2*sin(90)}) circle (1.5pt) node[above] {$a_1$};

\draw[fill=black] ({2*cos(45)}, {2*sin(45)}) circle (1.5pt) node[above] {$a_{m}$};

\draw[thick, ->] (0,0) -- ({2*cos(90)}, {2*sin(90)});
\draw[thick, ->] (0,0) -- ({2*cos(45)}, {2*sin(45)});

\draw [dotted,domain=295:320, thick] plot ({2*cos(\x)}, {2*sin(\x)});

\draw[fill=black] ({2*cos(330)}, {2*sin(330)}) circle (1.5pt) node[right] {$b_1$};

\draw[fill=black] ({2*cos(285)}, {2*sin(285)}) circle (1.5pt) node[below right] {$b_{n}$};

\draw[thick, <-] (0,0) -- ({2*cos(330)}, {2*sin(330)});
\draw[thick, <-] (0,0) -- ({2*cos(285)}, {2*sin(285)});

\draw [dotted,domain=175:200, thick] plot ({2*cos(\x)}, {2*sin(\x)});

\draw[fill=black] ({2*cos(165)}, {2*sin(165)}) circle (1.5pt) node[left] {$c_{l}$};

\draw[fill=black] ({2*cos(210)}, {2*sin(210)}) circle (1.5pt) node[below left] {$c_1$};

\draw[thick] (0,0) -- ({2*cos(165)}, {2*sin(165)});
\draw[thick] (0,0) -- ({2*cos(210)}, {2*sin(210)});

\draw [dotted,domain=-20:35, thick] plot ({2*cos(\x)}, {2*sin(\x)});
\draw [dotted,domain=100:155, thick] plot ({2*cos(\x)}, {2*sin(\x)});
\draw [dotted,domain=220:275, thick] plot ({2*cos(\x)}, {2*sin(\x)});
\end{tikzpicture}\caption{Star shaped graph}
\end{figure}\label{Star shaped graph}
where one can have other edges (directed or not) among the vertices on the dotted circle. The proof comes down to showing that~$T(\Gamma)$ is subgroup separable.
Consider the subgroup:
\[ 
G = \sgp{x^2, \lk(x)} = \sgp{x^2, a_1, \ldots, a_{m}, b_1, \ldots, b_{n}, c_1, \ldots, c_{l}}.
\]
{\bf Claim.} The group~$G$ is normal in~$T(\Gamma)$ of index~$2$. 

{\it Proof of Claim.} It is enough to show~$vGv^{-1} \subseteq G$ for any~$v \in V\Gamma$. This is clearly true, by definition of~$G$, for any~$v \in V\Gamma \setminus \{x\}$. On the other hand, for~$x$, it is enough to show that for any~$v \in V\Gamma \setminus \{x\}$ one has~$xvx^{-1} \in G$. We distinguish three cases:
\begin{itemize}[topsep=0pt, itemsep = 0pt]
\item[(i)] If~$[x, v] \in E\Gamma$, then~$xv = vx$ and so~$xvx^{-1} = v$, which lies in~$G$. 
\item[(ii)] If~$\langle x, v] \in E\Gamma$, then~$vxv = x$, hence~$xvx^{-1} = v^{-1}$ which lies in~$G$ because~$v \in G$. 
\item[(iii)] If~$[x, v \rangle \in E\Gamma$, then~$xvx = v$, giving~$xvx^{-1} = vx^{-2}$ which lies in~$G$ because~$v, x^{2} \in G$.  
\end{itemize}
This completes the proof that~$G$ is normal in~$T(\Gamma)$. One can see that~$G$ has index~$2$ in~$T(\Gamma)$, by considering the quotient~$T(\Gamma) / G = \gp{x}{x^2}$ which is a cyclic group of order~$2$.

Now we can apply the Reidemeister-Schreier procedure to obtain a presentation for~$G$. \\
{\bf Claim.} The group~$G$ has a T-RAAG presentation based on the graph~$\Delta$ given below:
\begin{figure}[H]
\centering
\begin{tikzpicture}[>={Straight Barb[length=7pt,width=4pt]}, rotate = 22.5, scale = 0.75] 

\draw [dotted,domain=55:80, thick] plot ({2*cos(\x)}, {2*sin(\x)});

\draw[fill=black] (0,0) circle (0pt) node[below] {$y$};

\draw[fill=black] ({2*cos(90)}, {2*sin(90)}) circle (1.5pt) node[above] {$a_1$};

\draw[fill=black] ({2*cos(45)}, {2*sin(45)}) circle (1.5pt) node[above] {$a_{m}$};

\draw[thick, ->] (0,0) -- ({2*cos(90)}, {2*sin(90)});
\draw[thick, ->] (0,0) -- ({2*cos(45)}, {2*sin(45)});

\draw [dotted,domain=295:320, thick] plot ({2*cos(\x)}, {2*sin(\x)});

\draw[fill=black] ({2*cos(330)}, {2*sin(330)}) circle (1.5pt) node[right] {$b_1$};

\draw[fill=black] ({2*cos(285)}, {2*sin(285)}) circle (1.5pt) node[below right] {$b_{n}$};

\draw[thick] (0,0) -- ({2*cos(330)}, {2*sin(330)});
\draw[thick] (0,0) -- ({2*cos(285)}, {2*sin(285)});

\draw [dotted,domain=175:200, thick] plot ({2*cos(\x)}, {2*sin(\x)});

\draw[fill=black] ({2*cos(165)}, {2*sin(165)}) circle (1.5pt) node[left] {$c_{l}$};

\draw[fill=black] ({2*cos(210)}, {2*sin(210)}) circle (1.5pt) node[below left] {$c_1$};

\draw[thick] (0,0) -- ({2*cos(165)}, {2*sin(165)});
\draw[thick] (0,0) -- ({2*cos(210)}, {2*sin(210)});

\draw [dotted,domain=-20:35, thick] plot ({2*cos(\x)}, {2*sin(\x)});
\draw [dotted,domain=100:155, thick] plot ({2*cos(\x)}, {2*sin(\x)});
\draw [dotted,domain=220:275, thick] plot ({2*cos(\x)}, {2*sin(\x)});
\end{tikzpicture}
\end{figure}
where~$y$ corresponds to~$x^2$, and the dotted region is the same as the one in~$\Gamma$.

{\it Proof of Claim.} With reference to \Cref{R-S}, the set~$T = \{1, x\}$ is a Schreier transversal for~$H$ in~$G$. Note that we obtain~$S = \{x^2, a_1, \ldots, a_{m}, b_1, \ldots, b_{n}, c_1, \ldots, c_{l}\}$ as a generating set, because the other generators of the form~$xv\overline{xv}^{-1}$ for~$v \in V\Gamma \setminus \{x\}$ are products of elements of~$S$ and their inverses - with the same proof as for the previous claim. \\ 
Furthermore, the relations we obtain now between elements in~$S$ are as depicted in the graph~$\Delta$; the only change is that~$y = x^2$ commutes with all the~$b_i$'s. Note that the edges belonging on the dotted region, do not change at all. This finishes the proof of the claim.

Using \autoref{thm: SgSep with subgroups and finite extensions}, it is enough to show that~$G$ is subgroup separable, as a finite index subgroup of~$T(\Gamma)$, which would imply that~$T(\Gamma)$ is subgroup separable as well.

{\bf Claim.} The group~$G = T(\Delta)$ is a semi-direct product of~$H = T(\Delta \setminus \{y\})$ acting on the normal subgroup~$N = \sgp{y} \simeq \Z$.

{\it Proof of Claim.} First, the subgroup~$N = \sgp{y}$ is normal, because for any~$v \in V\Delta \setminus \{y\}$ we either have~$[y, v] \in E\Delta$ or~$[y, v\rangle \in E\Delta$; the first edge gives~$vyv^{-1} = y \in N$, while the second (directed) edge gives~$vyv^{-1} = y^{-1} \in N$.

One also has~$N \cap H = \{1\}$ as intersection of standard parabolic subgroups, coming from normal forms in \citep{foniqi2024twisted}. 

Moreover~$G = N \cdot H$, because~$y$ shuffles with any generator of~$H$; indeed, for any~$v \in V\Delta \setminus \{y\}$ we either have~$vy = yv$ or~$vy = y^{-1}v$. Ultimately,~$G = N \rtimes H \simeq \Z \rtimes H$.

By induction, the group~$T(\Delta \setminus \{y\})$ is subgroup separable because~$\Delta \setminus \{y\}$ 
has less vertices than~$|V\Gamma|$. Ultimately,~$G$ is subgroup separable as a semi-direct product of~$\Z$ and a subgroup separable group (this follows from \autoref{cor: SgSep with semi-direct products}).
\end{proof}

\section{Membership Problem in T-RAAGs}\label{Membership Problem in T-RAAGs}

A \emph{decision problem} is a YES -- NO question on a countable infinite set of inputs. 
A decision problem is called \emph{decidable} if there is an algorithm which takes an input, terminates after a finite amount of time, and correctly answers the question by YES or NO. If such an algorithm does not exist, we refer to the decision problem as \emph{undecidable}.

The word problem and the subgroup membership problem, introduced earlier, are both instances of decision problems. Natural generalizations of these classical problems include the submonoid membership problem and the rational subset membership problem, which we will define and discuss in more detail in~\Cref{Submonoids and Rational Subsets}.

\subsection{Subgroup Membership Problem in T-RAAGs}\label{Subgroup Membership Problem in T-RAAGs}

The subgroup membership problem remains open for RAAGs, and hence it is open for T-RAAGs as well. However, subgroup separable T-RAAGs have a decidable subgroup membership problem~\citep{mal1958homomorphisms}.

On the other hand, subgroup non-separability does not imply undecidability of the subgroup membership problem: for example, \(A(P_4)\) is not subgroup separable, yet has a decidable subgroup membership problem.

\begin{prop}[{\citealp*[Corollary 1.3]{kapovich2005foldings}}] Let $\Gamma$ be a simplicial chordal graph. Then,~$A(\Gamma)$ has decidable subgroup membership problem.
\end{prop}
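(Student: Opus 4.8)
This proposition is quoted from \citet{kapovich2005foldings}, so the plan is to give a streamlined account of their argument specialized to right-angled Artin groups. The graph-theoretic input is the classical characterization: a finite graph $\Gamma$ is chordal if and only if it admits a \emph{perfect elimination ordering}, i.e. an ordering $v_1, \dots, v_n$ of its vertices such that for each $i$ the vertex $v_i$ is \emph{simplicial} in the subgraph induced by $\{v_i, \dots, v_n\}$ --- its neighbours there span a clique. This is exactly what powers an induction on $|V\Gamma|$.

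\textbf{Reduction to a graph of groups.} First I would set up the induction. If $v = v_1$ is a simplicial vertex with neighbourhood $N = N(v)$, then $N$ spans a clique, the star $\mathrm{st}(v) = \{v\}\cup N$ spans a clique as well, and one has the standard amalgam decomposition
\[
A(\Gamma) \;=\; A(\Gamma\setminus v)\ \ast_{A(N)}\ A(\mathrm{st}(v)),
\]
where $A(\mathrm{st}(v)) \cong \Z^{|N|+1}$ and the edge group $A(N)\cong \Z^{|N|}$ sits inside it as a direct factor. By the perfect elimination ordering, $\Gamma\setminus v$ is again chordal, so by the inductive hypothesis $A(\Gamma\setminus v)$ has decidable subgroup membership; the base case $|V\Gamma| = 1$ is $\Z$, where membership is divisibility. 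Iterating the splitting exhibits $A(\Gamma)$ as the fundamental group of a finite graph of groups in which \emph{every} vertex and edge group is finitely generated free abelian.

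\textbf{Folding machinery.} Next I would invoke the Stallings-style folding technology for graphs of groups developed in \citet{kapovich2005foldings}. Given a finite set $S\subset A(\Gamma)$ and an element $g$, one builds a folded $\mathbb{G}$-graph representing $\langle S\rangle$ by a sequence of elementary folds and then checks whether $g$ labels a loop based at the appropriate vertex. For this one needs all the relevant local operations to be effective, and for free abelian vertex and edge groups they are: membership and coset enumeration in $\Z^k$ are linear algebra over $\Z$ (Hermite/Smith normal forms); one can compute the intersection of a subgroup of a vertex group with the incident edge subgroups and with their conjugates; and $\Z^k$ is Noetherian, so every subgroup that arises is again finitely generated and can be produced explicitly. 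I would then record that the folding process terminates because each fold does not increase, and eventually strictly decreases, a complexity measure on $\mathbb{G}$-graphs that is controlled by the effectively bounded ranks of the abelian labels appearing.

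\textbf{Main obstacle.} The delicate point is precisely this termination/effectiveness step, and it is where chordality must be used essentially: in a general amalgamated product the membership problem is undecidable (for instance $A(C_4)\cong F_2\times F_2$, and $C_4$ is not chordal). Chordality enters through the fact that in the decomposition above \emph{all} edge groups are Noetherian and sit as separable --- indeed direct-factor --- subgroups of the vertex groups, which is exactly what keeps the intermediate subgroup graphs finite and forces the folding sequence to halt. Verifying the bookkeeping that converts this into an explicit, uniformly terminating algorithm is the technical heart of the matter and is carried out in \citet{kapovich2005foldings}; the reduction to a graph of groups and the abelian base case are routine.
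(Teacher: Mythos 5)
Your proposal is essentially correct, but note that the paper does not actually prove this proposition: it is quoted verbatim as \citep[Corollary 1.3]{kapovich2005foldings}, so there is no in-paper argument to compare against. What you have written is a faithful reconstruction of the cited proof: reduce via chordality to a finite graph of groups with finitely generated free abelian vertex and edge groups, then run the Kapovich--Weidmann--Myasnikov folding algorithm, whose effectiveness and termination rest on the Noetherian property and computable linear algebra in $\Z^k$. The only stylistic difference worth flagging is that you peel off one simplicial vertex at a time via a perfect elimination ordering, writing $A(\Gamma) = A(\Gamma\setminus v) \ast_{A(N)} A(\mathrm{st}(v))$, whereas the paper's own proof of the T-RAAG analogue (\autoref{prop: subgroup membership}) uses the equivalent clique-separator characterization of chordality, splitting $\Gamma = \Gamma_1 \cup \Gamma_2$ with $\Gamma_1 \cap \Gamma_2$ complete; both routes produce the same kind of decomposition and both defer the algorithmic heart to \citep[Theorem 1.1]{kapovich2005foldings}, which is the appropriate level of detail for a cited result.
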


In contrast, the RAAG \(A(C_4) = F_2 \times F_2\) contains a subgroup with undecidable membership~\citep{mikhailova1966occurrence}; the graph \(C_4\) is not chordal.

RAAGs defined by chordal graphs are precisely the coherent ones~\citep{droms1987subgroups}.

\begin{cor}
Coherent RAAGs have decidable subgroup membership problem.
\end{cor}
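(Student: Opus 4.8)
The plan is to deduce this directly by combining the two results just recalled. First I would unpack the statement: a \emph{coherent RAAG} is a right-angled Artin group $A(\Gamma)$ that happens to be coherent, so the task is to produce, for every such group, an algorithm deciding membership in an arbitrary finitely generated subgroup. The key input is Droms' characterization, stated immediately above, that a RAAG $A(\Gamma)$ is coherent precisely when its defining graph $\Gamma$ is chordal. Thus given a coherent RAAG, I would first invoke this characterization to conclude that its defining graph is chordal.

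Next I would apply the Proposition of \citet*{kapovich2005foldings} (Corollary 1.3), which guarantees that $A(\Gamma)$ has decidable subgroup membership problem whenever $\Gamma$ is a simplicial chordal graph. Since by the previous step $\Gamma$ is chordal, the subgroup membership problem for our coherent RAAG is decidable, which is exactly the claim. In short, the argument is the composition ``coherent $\Rightarrow$ chordal defining graph $\Rightarrow$ decidable subgroup membership''.

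There is essentially no hard step here; the only point requiring care is that coherence is an abstract group-theoretic property while the two cited results are phrased in terms of the defining graph, so one must be sure that ``coherent RAAG'' corresponds unambiguously to ``RAAG on a chordal graph''. This is precisely what Droms' theorem provides (and the fact that a RAAG determines its defining graph up to isomorphism makes the passage between the group and the graph harmless). Hence the corollary is immediate from the stated results.
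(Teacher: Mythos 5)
Your proposal is correct and is exactly the argument the paper intends: the corollary is stated immediately after the Kapovich--Weidmann--Myasnikov proposition for chordal graphs and Droms' characterization of coherent RAAGs as those on chordal graphs, and it follows by composing the two. Nothing further is needed.
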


For a finite mixed graph~$\Gamma$ we will say that~$\Gamma$ is \emph{chordal} if~$\overline{\Gamma}$ is chordal.

One also has that T-RAAGs based on chordal graphs are exactly the coherent ones (see [Blumer, Foniqi, Quadrelli]).

The main goal of this secion, which uses \citep*[Theorem 1.1]{kapovich2005foldings}, is to show

\begin{prop}\label{prop: subgroup membership} Let $\Gamma$ be a mixed chordal graph. Then,~$T(\Gamma)$ has decidable subgroup membership problem.
\end{prop}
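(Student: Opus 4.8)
The plan is to reduce the subgroup membership problem for $T(\Gamma)$ with $\Gamma$ mixed chordal to the known RAAG result \citep*[Theorem 1.1 and Corollary 1.3]{kapovich2005foldings}, by exploiting the finite-index normal subgroup produced in the proof of \autoref{thm: main theorem}. More precisely, I would argue by induction on $|V\Gamma|$, mirroring the structure of that proof. The base cases ($|V\Gamma| \le 2$, giving $\Z$, $\Z^2$, or the Klein bottle group $K$) are handled directly: each is coherent and has easily-described finite-index RAAG or $\Z^2$ subgroups, so subgroup membership is decidable. Since a disjoint union of mixed graphs yields a free product $T(\Gamma_1) * T(\Gamma_2)$, and subgroup membership is well known to be decidable for free products of groups with decidable subgroup membership (via the Bass--Serre/folding machinery, or directly), we may assume $\Gamma$ connected. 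The key point is that chordality of $\overline\Gamma$ passes to the relevant subgraphs: a connected chordal graph on $\ge 3$ vertices still has a dominating vertex $x$ only in the transitive-forest case, so instead I would use a simplicial vertex or, more robustly, directly invoke that $T(\Gamma)$ is virtually a RAAG-like group.

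The cleaner route, which I expect to be the intended one, is: \textbf{Step 1.} Show that for $\Gamma$ mixed chordal, $T(\Gamma)$ has a finite-index subgroup isomorphic to $A(\overline\Gamma')$ for some chordal graph $\Gamma'$ — or at least to a T-RAAG on strictly fewer vertices times $\Z$, set up so the induction closes. Concretely, pick a vertex $x$ incident to a directed edge (if none exists, $T(\Gamma) = A(\Gamma)$ is itself a chordal RAAG and we are done by \citep*[Corollary 1.3]{kapovich2005foldings}); pass to the index-$2$ subgroup $G = \langle x^2, V\Gamma\setminus\{x\}\rangle$ exactly as in the proof of \autoref{thm: main theorem}, obtaining $G \cong T(\Delta)$ where $\Delta$ replaces $x$ by $y = x^2$ and converts every directed edge at $x$ into an undirected edge. \textbf{Step 2.} Check that $\overline\Delta = \overline\Gamma$ as simplicial graphs, hence $\Delta$ is again mixed chordal, and that $\Delta$ has strictly fewer directed edges than $\Gamma$. \textbf{Step 3.} Iterate: after finitely many such index-$2$ passages we reach a subgroup of finite index in $T(\Gamma)$ that is a genuine RAAG $A(\overline\Gamma)$ on a chordal graph, whose subgroup membership problem is decidable by \citep*[Corollary 1.3]{kapovich2005foldings}.

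\textbf{Step 4.} Finally, transfer decidability back up the finite-index tower. This uses the standard fact that if $K \le_{f.i.} L$ and $K$ has decidable subgroup membership problem, then so does $L$: given $g \in L$ and a finitely generated $H \le L$, compute a transversal, note $H \cap K$ is finite-index in $H$ hence finitely generated with a computable generating set, and decide membership of $gk^{-1}$ in $H\cap K$ for each of the finitely many coset representatives $k$; equivalently one runs the Reidemeister--Schreier rewriting (\Cref{R-S}) to pull everything into $K$. Combining Steps 1--4 yields that $T(\Gamma)$ has decidable subgroup membership problem whenever $\overline\Gamma$ is chordal.

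\textbf{Main obstacle.} The delicate point is Step 4 together with the bookkeeping in Steps 1--3: one must verify that each index-$2$ passage genuinely preserves chordality of the underlying graph and \emph{strictly decreases} the count of directed edges (so the process terminates), and that the isomorphism $G \cong T(\Delta)$ is explicit enough to carry generating sets of subgroups and elements effectively across each stage. A subtler worry is whether "finite-index subgroup has decidable subgroup membership $\Rightarrow$ the whole group does" is being invoked in the direction it actually holds — it does hold in this direction (the easy direction), but one should state it carefully, since the converse is the one that genuinely requires separability. I would also need the free-product case of subgroup membership decidability spelled out or cited, and to confirm that the Klein bottle group $K$, which is \emph{not} a RAAG, nonetheless has decidable subgroup membership (it does, being virtually $\Z^2$, or polycyclic).
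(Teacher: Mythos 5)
Your Steps 1--3 contain a genuine gap, and it is the one you yourself flagged and then set aside. The index-$2$ subgroup device from the proof of \autoref{thm: main theorem} works only because Droms's lemma supplies a \emph{dominating} vertex $x$ (adjacent to every other vertex) when $\overline\Gamma$ is a connected transitive forest. A connected chordal graph need not have such a vertex: $P_4$ is chordal and has none. If $x$ has a non-neighbour $v$, the subgroup $G=\langle x^2, V\Gamma\setminus\{x\}\rangle$ is neither normal nor of index $2$: already in the free group on $x,v$ the element $xvx^{-1}$ lies in the index-$2$ kernel of the map killing $v$ but not in $\langle x^2,v\rangle$, so $G$ is a proper (indeed infinite-index) subgroup of that kernel. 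Taking the honest index-$2$ kernel instead does not help, since Reidemeister--Schreier then produces the extra generators $xvx^{-1}$ and the result is a T-RAAG on a ``doubled'' graph, not on a graph with underlying graph $\overline\Gamma$; so Step 2 fails and the iteration in Step 3 does not close. Even granting each individual passage, the next vertex carrying a directed edge would again have to be dominating, which you cannot arrange. Finally, the fallback ``$T(\Gamma)$ is virtually $A(\overline\Gamma)$'' is not available: \autoref{lem: square injectivity} embeds $A(\overline\Gamma)$ via squares, but that subgroup is of infinite index in general (e.g.\ for edgeless $\Gamma$). Your Step 4 (passing decidability up a finite-index tower) and the free-product and two-vertex base cases are fine, but they do not rescue the core reduction.

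The paper's proof takes a different and shorter route that you should compare against: it inducts on clique separators rather than on directed edges. If $\Gamma$ is complete, $T(\Gamma)$ is virtually free abelian (the squares of the vertices generate a finite-index free abelian subgroup), so membership is decidable. Otherwise chordality gives $\Gamma=\Gamma_1\cup\Gamma_2$ with $\Gamma_1\cap\Gamma_2=C$ complete, hence a splitting $T(\Gamma)=T(\Gamma_1)\ast_{T(C)}T(\Gamma_2)$ with $T(C)$ virtually free abelian; iterating exhibits $T(\Gamma)$ as a graph of groups with virtually abelian vertex and edge groups, and one concludes by \citep*[Theorem 1.1]{kapovich2005foldings} rather than by \citep*[Corollary 1.3]{kapovich2005foldings}. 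In other words, the directed edges are never ``removed'' at all; they are absorbed into the virtually abelian pieces. If you want to salvage your approach, you would need to replace the dominating-vertex step by a simplicial-vertex or clique-separator step, at which point you essentially reconstruct the paper's argument.
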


\begin{proof}
Let~$\Gamma$ be chordal. In the case when~$\Gamma$ is complete, the group~$T(\Gamma)$ is virtually free abelian; indeed, the subgroup generated by squares of the vertices of $\Gamma$ is free abelian, and of finite index. Threfore, $T(\Gamma)$ has decidable subgroup membership problem. 

If~$\Gamma$ is not complete, then there are two proper subgraphs~$\Gamma_1$ and~$\Gamma_2$ with~$\Gamma = \Gamma_1 \cup \Gamma_2$ and~$\Gamma_1 \cap \Gamma_2 = C$, with~$C$ complete. Here one has a splitting:
\[
T(\Gamma) = T(\Gamma_1) *_ {T(C)} T(\Gamma_2).
\]
As~$C$ is complete,~$T(C)$ is virtually free abelian. 

Now, by the induction hypothesis, both $T(\Gamma_1)$, and $T(\Gamma_2)$ are iterated amalgamated products or graphs of groups - with virtually abelian vertex groups (and edge groups). 
Now the result follows from induction and \citep*[Theorem 1.1]{kapovich2005foldings}.
\end{proof}

\subsection{Submonoids and Rational Subsets}\label{Submonoids and Rational Subsets}

Let $M$ be a monoid generated by a finite set $A$, and let $\phi: A^* \to M$ be the canonical homomorphism.

\medskip
\noindent
The \emph{submonoid membership problem} asks: given a finite set $S \subseteq A^*$ and a word $w \in A^*$, does $\phi(w) \in \phi(S^*)$? This problem is well-defined for groups as well, using monoid generators $A \cup A^{-1}$.

\medskip
\noindent
A subset $L \subseteq M$ is \emph{rational} if $L = \phi(R)$ for some regular language $R \subseteq A^*$. Rational subsets generalize finitely generated submonoids.

\medskip
\noindent
The \emph{rational subset membership problem} asks: given a rational subset $R \subseteq M$ and a word $w \in A^*$, is $\phi(w) \in R$? This applies to groups viewed as monoids.

\begin{remark}
Decidability of rational subset membership implies decidability of submonoid membership.
\end{remark}

\medskip
\noindent
These problems are preserved under passing to submonoids. Notably, a group may have decidable submonoid membership but undecidable rational subset membership~\citep{bodart2024membership}.

\begin{theorem}[{\citealp{lohrey2008submonoid}}]
A RAAG $A(\Gamma)$ has decidable submonoid and rational subset membership problems if and only if $\Gamma$ contains neither $P_4$ nor $C_4$ as induced subgraphs.
\end{theorem}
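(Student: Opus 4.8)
The plan is to prove the two directions separately, after a preliminary reduction. Since every finitely generated submonoid of a group is a rational subset, decidability of the rational subset membership problem implies decidability of the submonoid membership problem; conversely, the submonoid membership problem is a restriction of the rational subset membership problem, so undecidability of the former implies undecidability of the latter. Hence it suffices to prove: \textbf{(a)} if $\Gamma$ contains no induced $P_4$ or $C_4$, then $A(\Gamma)$ has decidable rational subset membership problem; and \textbf{(b)} if $\Gamma$ contains an induced $P_4$ or $C_4$, then $A(\Gamma)$ has undecidable submonoid membership problem. Together these give the claimed equivalence for both problems simultaneously.

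For \textbf{(a)} I would argue by structural induction using the recursive description of the relevant class of groups. A simplicial graph with no induced $P_4$ or $C_4$ is a transitive forest, and by \autoref{rem: elementary RAAGs} the class $\mathcal{A}(\mathcal{E})$ of elementary RAAGs is the smallest class of groups containing the trivial group and closed under $G \mapsto G \times \mathbb{Z}$ and under free products. So it is enough to check that the property ``has decidable rational subset membership problem'' (i) holds for the trivial group, (ii) is inherited by $G \mapsto G \times \mathbb{Z}$, and (iii) is inherited by free products. Point (i) is immediate. Point (iii) is the known closure of the class of groups with decidable rational subset membership problem under free products: for free groups this is Benois' theorem, and the general free-product case is obtained by tracking rational subsets along the Bass--Serre tree of the splitting. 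Point (ii) is the crucial lemma: membership of a given element of $G \times \mathbb{Z}$ in a rational subset $R$ is decided by using a finite automaton for $R$ to read off, for each relevant ``$G$-behaviour'', the set of achievable values of the $\mathbb{Z}$-coordinate, which is an effectively computable semilinear subset of $\mathbb{Z}$, and then testing the target value against it together with the corresponding rational-subset query in $G$.

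For \textbf{(b)} the first step is a retract argument. If $\Delta$ is an induced subgraph of $\Gamma$, then the homomorphism $A(\Gamma) \to A(\Delta)$ killing the vertices outside $\Delta$ and fixing those of $\Delta$ is a retraction onto $A(\Delta) \leqslant A(\Gamma)$; and whenever $r \colon G \to H$ is a retraction with inclusion $\iota \colon H \hookrightarrow G$, submonoid membership in $H$ reduces to submonoid membership in $G$, since for finite $S \subseteq H$ and $w \in H$ one has $w$ in the submonoid generated by $S$ in $H$ if and only if $\iota(w)$ is in the submonoid generated by $\iota(S)$ in $G$ (the nontrivial direction follows by applying $r$ to a witnessing product). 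Hence it remains to show that $A(C_4)$ and $A(P_4)$ each have undecidable submonoid membership problem. For $A(C_4) \cong F_2 \times F_2$ one can invoke the Mihailova construction \citep{mikhailova1966occurrence}: fixing a finitely presented group $Q$ with unsolvable word problem, the Mihailova subgroup $M = \{(u,v) : u = v \text{ in } Q\}$ is finitely generated and membership in it is undecidable; as $M$ is generated as a group by a finite set, it coincides with the submonoid generated by that set together with the inverses of its elements, so the submonoid membership problem in $F_2 \times F_2$ is already undecidable.

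The case of $A(P_4)$ is the real obstacle, because $A(P_4)$ contains neither $A(C_4)$ nor $F_2 \times F_2$, so a genuinely new encoding is needed. Here I would exploit the amalgam decomposition $A(P_4) \cong \bigl(F(a,c) \times \langle b \rangle\bigr) *_{\langle b, c\rangle} \bigl(F(b,d) \times \langle c \rangle\bigr)$ over $\mathbb{Z}^2$, using the two $F_2 \times \mathbb{Z}$ factors as registers and the amalgamation over $\mathbb{Z}^2$ as a synchronisation device, so as to simulate the computation of a $2$-counter (Minsky) machine; one then engineers a finitely generated submonoid and a target element so that the element lies in the submonoid if and only if the machine halts. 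The hard part, and the step I expect to dominate the proof, is to establish faithfulness of this simulation: ruling out ``parasitic'' submonoid elements arising from unintended interleavings of syllables across the amalgamated free product, so that reachability of the target element corresponds to a genuine halting computation and to nothing else.
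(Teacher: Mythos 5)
This theorem is quoted in the paper from \citealp{lohrey2008submonoid} without proof, so your attempt can only be measured against the original source; at the level of architecture you do match it (structural induction over the elementary class for the positive direction; retraction onto $A(C_4)$ or $A(P_4)$, Mihailova for $C_4$, and a dedicated encoding for $P_4$ on the negative side). The first genuine gap is your step (a)(ii). Whether decidability of the rational subset membership problem is preserved under direct product with $\Z$ is a well-known open problem, and your assertion that the set of achievable $\Z$-coordinates attached to each relevant ``$G$-behaviour'' of the automaton is ``an effectively computable semilinear subset of $\Z$'' is precisely the point at issue: it cannot be extracted from the bare induction hypothesis that $G$ has decidable rational subset membership. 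Lohrey and Steinberg circumvent this by strengthening the induction invariant --- roughly, they prove by induction that every group in the class satisfies a semilinearity property for $\Z^n$-weighted rational subsets (equivalently, that the reachable weight vectors of automata over $G$ with edges additionally labelled in $\Z^n$ form effectively computable semilinear sets), a property that is trivially preserved by $-\times\Z$ and whose preservation under free products is where the real work lies. With only the weak hypothesis, your induction does not close.

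The second gap is the $A(P_4)$ case of (b), which you correctly identify as the crux but leave as a plan: you propose to simulate a two-counter machine inside the amalgam $\bigl(F(a,c)\times\langle b\rangle\bigr)*_{\langle b,c\rangle}\bigl(F(b,d)\times\langle c\rangle\bigr)$ and explicitly defer the faithfulness argument ruling out parasitic factorizations. That verification is the entire content of the theorem for this case, so as written the proposal does not establish undecidability of submonoid membership for $A(P_4)$; in the source, this is a substantial dedicated construction (the rational subset lower bound for non-transitive-forests comes more cheaply from the Aalbersberg--Hoogeboom undecidability of the intersection problem for rational trace languages, via $R_1\cap R_2\neq\emptyset \Leftrightarrow 1\in R_1R_2^{-1}$, but the submonoid version for $P_4$ needs its own encoding). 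The remaining ingredients of your proposal --- the reduction of submonoid membership to rational subset membership, the retraction argument for induced subgraphs, the Mihailova argument for $F_2\times F_2$, and closure of rational subset membership under free products --- are correct and standard.
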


It is clear that if $\Gamma$ is a mixed graph of non-elementary type, then $T(\Gamma)$ has undecidable submonoid membership problem, as it contains $A(P_4)$ or $A(C_4)$.

On the other hand, the situation is not clear for elementary mixed graphs. But, there is an interesting sub-family of elementary T-RAAGs for which the rational subset membership problem is decidable; the construction of this family will be given below and it is motivated from the proof of \autoref{thm: main theorem}.

\begin{definition}\label{def: sink-cone}
Let $\Gamma = (V, E)$ be a mixed graph. The \emph{$\Gamma$-cone} $\nabla(\Gamma)$, with tip $w$, is a mixed graph with vertex set $V \sqcup \{w\}$, such that $\Gamma$ is an induced subgraph, and every vertex~$v$ of $\Gamma$ is connected to $w$ with edges of the type $[v, w]$ or $[v, w \rangle$.
\end{definition}

\begin{definition}\label{def: class R}
Let $\mathcal{R}$ be the class of mixed graphs of elementary type, constructed by starting with a single vertex, and iterating through disjoint unions and $\Gamma$-cones.
\end{definition}

\begin{theorem}\label{thm: class R}
Let $\Gamma$ be a mixed graph belonging in $\mathcal{R}$. $T(\Gamma)$ has decidable rational subset membership problem. Consequently, $T(\Gamma)$ has decidable submonoid membership problem.
\end{theorem}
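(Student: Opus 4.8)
The plan is to run an induction on the construction of $\Gamma \in \mathcal{R}$ that mirrors the inductive proof of \autoref{thm: main theorem}, tracking at each stage not merely decidability of rational subset membership but a structural form of $T(\Gamma)$ that makes decidability transparent and is preserved by the two operations defining $\mathcal{R}$. Concretely, I would prove by induction the stronger claim: for every $\Gamma \in \mathcal{R}$, the group $T(\Gamma)$ can be built from copies of $\Z$ by finitely many applications of (a) free products and (b) semidirect products $\Z \rtimes (-)$, and moreover the group belongs to a class closed under rational subset membership decidability. The base case, a single vertex, gives $T(\Gamma) = \Z$, which has decidable rational subset membership (its rational subsets are effectively semilinear). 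For disjoint unions, $\Gamma = \Gamma_1 \sqcup \Gamma_2$ yields $T(\Gamma) = T(\Gamma_1) * T(\Gamma_2)$, and decidability of rational subset membership is preserved by free products of groups in the relevant class; here I would invoke the known closure results for rational subset membership under free product (e.g.\ via the work on graph products / free products of groups with decidable rational subset membership, such as \citealp{lohrey2008submonoid}), noting that the component groups are of the required inductive shape.

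The crucial case is the $\Gamma$-cone $\nabla(\Gamma)$ with tip $w$. Here I would argue exactly as in the proof of \autoref{thm: main theorem}: pass to the index-$2$ subgroup $G = \sgp{w^2, \lk(w)}$, which the Reidemeister--Schreier computation there identifies with $T(\Delta)$ where $\Delta$ is obtained by replacing every directed edge $[v, w\rangle$ or $\langle v, w]$ at the tip by the undirected edge $[v, w^2]$, so that $y \coloneqq w^2$ is central-like: it commutes or skew-commutes with every other generator. This exhibits $G = \sgp{y} \rtimes H$ with $H = T(\Delta \setminus \{y\}) \cong T(\Gamma)$ (the cone base, which is in $\mathcal{R}$ and strictly smaller), and $\sgp{y} \cong \Z$ normal. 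So $T(\nabla(\Gamma))$ is virtually $\Z \rtimes T(\Gamma)$. To conclude I would show (i) $\Z \rtimes H$ has decidable rational subset membership whenever $H$ does — this is where I expect the real work to lie — and (ii) decidability of rational subset membership passes from a finite-index subgroup to the whole group, which is a standard fact (finite-index subgroups preserve rational subset membership decidability in both directions, via the Reidemeister--Schreier rewriting of rational subsets).

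The main obstacle is step (i): rational subset membership is \emph{not} in general inherited by extensions, even by $\Z$-by-(nice) extensions, so one cannot simply cite a black box. I would handle it by exploiting the very special form of the action: in $\Z \rtimes H$ the conjugation action of $H$ on $\Z = \sgp{y}$ factors through $\{\pm 1\}$, so in fact $\Z \rtimes H$ is $\Z$-by-$H$ with the action given by a homomorphism $H \to \Z/2\Z$; equivalently $T(\nabla(\Gamma))$ sits inside (and is commensurable with) $\Z \times H'$ for a suitable finite extension, or can be analysed directly as a graph-of-groups / as a group acting on a line. The cleanest route is probably to observe that $\Z \rtimes H$ with this $\pm 1$ action embeds with finite index into $\Z \times \widehat{H}$ for a degree-$\le 2$ extension $\widehat H$ of $H$ (lifting the sign cocycle), reducing everything to: $\Z \times H$ has decidable rational subset membership when $H$ does. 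That last statement, $\Z \times (-)$ preservation, is known for the classes built from $\Z$ by free and direct products — indeed it is exactly the engine behind \citealp{lohrey2008submonoid} for elementary RAAGs — and since by the inductive shape our $H$ lies in precisely such a class, the induction closes. Throughout, every transformation (index-$2$ subgroup, Reidemeister--Schreier rewriting, finite extension, embedding into a direct product) is effective, so decidability is genuinely algorithmic. The final sentence then follows from \autoref{remark} that rational subset membership decidability implies submonoid membership decidability.
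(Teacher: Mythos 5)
Your proposal follows the same skeleton as the paper's proof: induction over the construction of $\mathcal{R}$, free-product closure for disjoint unions, passage to the index-$2$ subgroup $G = \sgp{w^2, \lk(w)}$ via Reidemeister--Schreier for the cone step, and then closure of rational subset membership under direct products with $\Z$ and under finite extensions. The base cases, the disjoint-union case, and the final appeal to finite-extension closure all match the paper.

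The one place you diverge is the cone step, and the detour you take there opens an avoidable gap. You treat $G$ as a possibly twisted semidirect product $\Z \rtimes H$ with a $\pm 1$ action and then propose to pass to $\Z \times H_0$ for $H_0 = \ker(H \to \{\pm 1\})$, a finite-index subgroup of $H$. But the ``$\Z \times (-)$ preservation'' you invoke at the end is \emph{not} a general closure property of groups with decidable rational subset membership --- as you yourself note, it is only known for the specific class built up from the trivial group by free products and direct products with $\Z$ \citep{lohrey2008submonoid}, and your induction only places $H = T(\Gamma \setminus \{w\})$ (virtually) in that class, not its finite-index subgroup $H_0$. So the last link in your chain, ``$\Z \times H_0$ is decidable because $H$ lies in the right class,'' does not close. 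The fix is already contained in \autoref{def: sink-cone}: the tip $w$ of a $\Gamma$-cone is only ever the \emph{terminus} of a directed edge, i.e.\ the relations at $w$ are $vw=wv$ or $vwv=w$, and in either case $w^2$ commutes with $v$. Hence there is no sign twist at all: every edge at $w^2$ in the rewritten presentation is undirected, $w^2$ is central in $G$, and $G = T(\Gamma \setminus \{w\}) \times \Z$ on the nose, with $T(\Gamma\setminus\{w\})$ again a T-RAAG over a graph in $\mathcal{R}$. This is exactly the point of restricting to sink-cones rather than arbitrary cones (compare the general star-shaped case in the proof of \autoref{thm: main theorem}, where outgoing edges at $x$ do produce a genuine $\pm1$ action), and it is what keeps the induction inside the class where the $\times\,\Z$ closure is actually available. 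With that observation substituted for your step (i), your argument coincides with the paper's.
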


\begin{proof}
We proceed by induction on $n = |V\Gamma|$.

If $n = 1$, then $T(\Gamma) \simeq \Z$, and the result is immediate.

For $n = 2$, there are three possibilities:
\begin{itemize}
  \item[(i)] $T(\Gamma) \simeq \Z \ast \Z$, which has a decidable rational subset membership problem, as a free product of groups that do.
  \item[(ii)] $T(\Gamma) \simeq \Z \times \Z$, which has a decidable rational subset membership problem, being a direct product of a group that does with $\Z$.
  \item[(iii)] $T(\Gamma) \simeq \Z \rtimes \Z$, which also has a decidable rational subset membership problem, as a finite extension of $\Z^2$.
\end{itemize}
Now assume that $n > 2$ and that the result holds for graphs in $\mathcal{R}$ with less vertices. 

If $\Gamma$ is disconnected, then $\Gamma = \Gamma_1 \sqcup \Gamma_2$, and so $T(\Gamma) = T(\Gamma_1) \ast T(\Gamma_2)$ and the result holds by induction and the closere of decidability of rational subset membership prblem under free products.

So, assume that $\Gamma$ is connected. By the construction of $\mathcal{R}$, we know that there is a vertex $w$ connected with every other vertex $v \in V\Gamma \setminus \{w\}$, by edges of the type $[v, w]$ or $[v, w \rangle$. Since graphs in $\mathcal{R}$ are mixed elementary as well, we can apply the proof of \autoref{thm: main theorem}. In particular, our graph looks like the one in \autoref{Star shaped graph}, without outgoing edges, hence it looks graphically as below:
\begin{figure}[H]
\centering
\begin{tikzpicture}[>={Straight Barb[length=7pt,width=4pt]}, rotate = 53.5, scale = 0.75] 

\draw [dotted,domain=55:80, thick] plot ({2*cos(\x)}, {2*sin(\x)});

\draw[fill=black] (0,0) circle (1.5pt) node[below] {};
\draw[fill=black] (-0.07,-0.07) circle (0pt) node[left] {$w$};

\draw[fill=black] ({2*cos(90)}, {2*sin(90)}) circle (1.5pt) node[above] {$b_1$};

\draw[fill=black] ({2*cos(45)}, {2*sin(45)}) circle (1.5pt) node[above] {$b_{2}$};

\draw[thick, <-] (0,0) -- ({2*cos(90)}, {2*sin(90)});
\draw[thick, <-] (0,0) -- ({2*cos(45)}, {2*sin(45)});

\draw [dotted,domain=295:320, thick] plot ({2*cos(\x)}, {2*sin(\x)});

\draw[fill=black] ({2*cos(330)}, {2*sin(330)}) circle (1.5pt) node[right] {$b_n$};

\draw[fill=black] ({2*cos(285)}, {2*sin(285)}) circle (1.5pt) node[below right] {$c_{l}$};

\draw[thick, <-] (0,0) -- ({2*cos(330)}, {2*sin(330)});
\draw[thick, -] (0,0) -- ({2*cos(285)}, {2*sin(285)});

\draw [dotted,domain=175:200, thick] plot ({2*cos(\x)}, {2*sin(\x)});

\draw[fill=black] ({2*cos(165)}, {2*sin(165)}) circle (1.5pt) node[left] {$c_{1}$};

\draw[fill=black] ({2*cos(210)}, {2*sin(210)}) circle (1.5pt) node[below left] {$c_l$};

\draw[thick] (0,0) -- ({2*cos(165)}, {2*sin(165)});
\draw[thick] (0,0) -- ({2*cos(210)}, {2*sin(210)});

\draw [dotted,domain=-20:35, thick] plot ({2*cos(\x)}, {2*sin(\x)});
\draw [dotted,domain=100:155, thick] plot ({2*cos(\x)}, {2*sin(\x)});
\draw [dotted,domain=220:275, thick] plot ({2*cos(\x)}, {2*sin(\x)});
\end{tikzpicture}
\end{figure}
Now, as in the proof of \autoref{thm: main theorem}, the subgroup:
\[ 
G = \sgp{w^2, \lk(w)} = \sgp{w^2, b_1, \ldots, b_{n}, c_1, \ldots, c_{l}}.
\]
is a T-RAAG over the mixed graph
\begin{figure}[H]
\centering
\begin{tikzpicture}[>={Straight Barb[length=7pt,width=4pt]}, rotate = 53.5, scale = 0.75] 

\draw [dotted,domain=55:80, thick] plot ({2*cos(\x)}, {2*sin(\x)});

\draw[fill=black] (0,0) circle (1.5pt) node[below] {};
\draw[fill=black] (-0.07,-0.07) circle (0pt) node[left] {$w^2$};

\draw[fill=black] ({2*cos(90)}, {2*sin(90)}) circle (1.5pt) node[above] {$b_1$};

\draw[fill=black] ({2*cos(45)}, {2*sin(45)}) circle (1.5pt) node[above] {$b_{2}$};

\draw[thick] (0,0) -- ({2*cos(90)}, {2*sin(90)});
\draw[thick] (0,0) -- ({2*cos(45)}, {2*sin(45)});

\draw [dotted,domain=295:320, thick] plot ({2*cos(\x)}, {2*sin(\x)});

\draw[fill=black] ({2*cos(330)}, {2*sin(330)}) circle (1.5pt) node[right] {$b_n$};

\draw[fill=black] ({2*cos(285)}, {2*sin(285)}) circle (1.5pt) node[below right] {$c_{l}$};

\draw[thick] (0,0) -- ({2*cos(330)}, {2*sin(330)});
\draw[thick, -] (0,0) -- ({2*cos(285)}, {2*sin(285)});

\draw [dotted,domain=175:200, thick] plot ({2*cos(\x)}, {2*sin(\x)});

\draw[fill=black] ({2*cos(165)}, {2*sin(165)}) circle (1.5pt) node[left] {$c_{1}$};

\draw[fill=black] ({2*cos(210)}, {2*sin(210)}) circle (1.5pt) node[below left] {$c_l$};

\draw[thick] (0,0) -- ({2*cos(165)}, {2*sin(165)});
\draw[thick] (0,0) -- ({2*cos(210)}, {2*sin(210)});

\draw [dotted,domain=-20:35, thick] plot ({2*cos(\x)}, {2*sin(\x)});
\draw [dotted,domain=100:155, thick] plot ({2*cos(\x)}, {2*sin(\x)});
\draw [dotted,domain=220:275, thick] plot ({2*cos(\x)}, {2*sin(\x)});
\end{tikzpicture}
\end{figure}
In particular, $G = T(\Gamma \setminus \{w\}) \times \Z$, which has decidable rational subset membership problem, by iunduction and the closere under direct product with $\Z$.

On the other hand, $G$ is a subgroup of index $2$ in $T(\Gamma)$, and so $T(\Gamma)$ has decidable rational subset membership problem, by closere under finite extensions.
\end{proof}

\noindent\textit{\\ Islam Foniqi,\\
The University of East Anglia\\ 
Norwich (United Kingdom)\\}
{email: i.foniqi@uea.ac.uk}

\setlength{\bibsep}{7pt}
\bibliography{main}

\begin{thebibliography}{20}
\providecommand{\natexlab}[1]{#1}
\providecommand{\url}[1]{\texttt{#1}}
\expandafter\ifx\csname urlstyle\endcsname\relax
  \providecommand{\doi}[1]{doi: #1}\else
  \providecommand{\doi}{doi: \begingroup \urlstyle{rm}\Url}\fi

\bibitem[Allenby and Gregorac(2006)]{allenby2006locally}
R.~Allenby and R.~J. Gregorac.
\newblock On locally extended residually finite groups.
\newblock In \emph{Conference on Group Theory: University of Wisconsin-Parkside
  1972}, pages 9--17. Springer, 2006.

\bibitem[Bodart(2024)]{bodart2024membership}
C.~Bodart.
\newblock Membership problems in nilpotent groups.
\newblock \emph{arXiv preprint arXiv:2401.15504}, 2024.

\bibitem[Burns(1971)]{burns1971finitely}
R.~G. Burns.
\newblock On finitely generated subgroups of free products.
\newblock \emph{Journal of the Australian Mathematical Society}, 12\penalty0
  (3):\penalty0 358--364, 1971.

\bibitem[Charney(2007)]{charney2007introduction}
R.~Charney.
\newblock An introduction to right-angled {Artin} groups.
\newblock \emph{Geometriae Dedicata}, 125\penalty0 (1):\penalty0 141–158,
  2007.

\bibitem[Clancy and Ellis(2010)]{clancy2010homology}
M.~Clancy and G.~Ellis.
\newblock Homology of some {Artin} and twisted {Artin} groups.
\newblock \emph{Journal of K-Theory}, 6\penalty0 (1):\penalty0 171–196, 2010.

\bibitem[Crisp and Paris(2001)]{crisp2001solution}
J.~Crisp and L.~Paris.
\newblock The solution to a conjecture of {Tits} on the subgroup generated by
  the squares of the generators of an {Artin} group.
\newblock \emph{Inventiones mathematicae}, 145:\penalty0 19--36, 2001.

\bibitem[Droms(1987)]{droms1987subgroups}
C.~Droms.
\newblock Subgroups of graph groups.
\newblock \emph{J. ALGEBRA.}, 110\penalty0 (2):\penalty0 519--522, 1987.

\bibitem[Foniqi(2022)]{foniqi2022}
I.~Foniqi.
\newblock \emph{Results on {Artin} and twisted {Artin} groups}.
\newblock PhD thesis, University of Milano - Bicocca, 2022.
\newblock URL \url{https://boa.unimib.it/handle/10281/374264}.

\bibitem[Foniqi(2024)]{foniqi2024twisted}
I.~Foniqi.
\newblock Twisted right-angled artin groups.
\newblock \emph{arXiv preprint arXiv:2407.06933}, 2024.

\bibitem[Kambites(2009)]{kambites2009commuting}
M.~Kambites.
\newblock On commuting elements and embeddings of graph groups and monoids.
\newblock \emph{Proceedings of the Edinburgh Mathematical Society}, 52\penalty0
  (1):\penalty0 155--170, 2009.

\bibitem[Kapovich et~al.(2005)Kapovich, Weidmann, and
  Myasnikov]{kapovich2005foldings}
I.~Kapovich, R.~Weidmann, and A.~Myasnikov.
\newblock Foldings, graphs of groups and the membership problem.
\newblock \emph{International Journal of Algebra and Computation}, 15\penalty0
  (01):\penalty0 95--128, 2005.

\bibitem[Kim and Koberda(2013)]{kim2013embedability}
S.-h. Kim and T.~Koberda.
\newblock Embedability between right-angled {Artin} groups.
\newblock \emph{Geometry \& Topology}, 17\penalty0 (1):\penalty0 493--530,
  2013.

\bibitem[Lohrey and Steinberg(2008)]{lohrey2008submonoid}
M.~Lohrey and B.~Steinberg.
\newblock The submonoid and rational subset membership problems for graph
  groups.
\newblock \emph{Journal of Algebra}, 320\penalty0 (2):\penalty0 728--755, 2008.

\bibitem[Lyndon and Schupp(1977)]{lyndon1977combinatorial}
R.~C. Lyndon and P.~E. Schupp.
\newblock \emph{Combinatorial group theory}, volume 188.
\newblock Springer, 1977.

\bibitem[Mal’cev(1958)]{mal1958homomorphisms}
A.~I. Mal’cev.
\newblock On homomorphisms onto finite groups.
\newblock \emph{Fluchen. Zap. Ivanovskogo Gos. Ped. Inst}, 18:\penalty0 49--60,
  1958.

\bibitem[Metaftsis and Raptis(2008)]{metaftsis2008profinite}
V.~Metaftsis and E.~Raptis.
\newblock On the profinite topology of right-angled {Artin} groups.
\newblock \emph{Journal of Algebra}, 320\penalty0 (3):\penalty0 1174--1181,
  2008.

\bibitem[Mikhailova(1966)]{mikhailova1966occurrence}
K.~Mikhailova.
\newblock The occurrence problem for direct products of groups.
\newblock \emph{Matematicheskii Sbornik}, 112\penalty0 (2):\penalty0 241--251,
  1966.

\bibitem[Niblo and Wise(2001)]{niblo2001subgroup}
G.~Niblo and D.~Wise.
\newblock Subgroup separability, knot groups and graph manifolds.
\newblock \emph{Proceedings of the American Mathematical Society}, 129\penalty0
  (3):\penalty0 685--693, 2001.

\bibitem[Pride(1986)]{pride1986tits}
S.~J. Pride.
\newblock On tits' conjecture and other questions concerning artin and
  generalized artin groups.
\newblock \emph{Inventiones mathematicae}, 86\penalty0 (2):\penalty0 347--356,
  1986.

\bibitem[Scott(1978)]{scott1978subgroups}
P.~Scott.
\newblock Subgroups of surface groups are almost geometric.
\newblock \emph{Journal of the London Mathematical Society}, 2\penalty0
  (3):\penalty0 555--565, 1978.

\end{thebibliography}


\end{document}